\newcommand{\id}{{\rm id}}
\renewcommand{\O}{{\mathcal O}}
\newtheorem{lemma1}{}[section]
\newenvironment{lemma}{\begin{lemma1}{\bf Lemma.}}{\end{lemma1}}
\newenvironment{example}{\begin{lemma1}{\bf Example.}\rm}{\end{lemma1}}
\newenvironment{theorem}{\begin{lemma1}{\bf Theorem.}}{\end{lemma1}}
\newenvironment{proposition}{\begin{lemma1}{\bf Proposition.}}{\end{lemma1}}
\newenvironment{corollary}{\begin{lemma1}{\bf Corollary.}}{\end{lemma1}}
\newenvironment{remark}{\begin{lemma1}{\bf Remark.}\rm}{\end{lemma1}}
\newenvironment{remark*}{{\bf Remark.}}{}
\newenvironment{example*}{{\bf Example.}}{}
\newcommand{\R}{\ensuremath{\mathbb{R}}}
\newcommand{\Q}{\ensuremath{\mathbb{Q}}}
\newcommand{\N}{\ensuremath{\mathbb{N}}}
\newcommand{\PP}{\ensuremath{\mathbb{P}}}
\newcommand{\holom}[3]{\ensuremath{#1:#2  \rightarrow #3}}
\newcommand{\fibre}[2]{\ensuremath{#1^{-1} (#2)}}
\newcommand\sO{{\mathcal O}}
\newcommand{\pic}[0]{\operatorname{Pic}}
\newcommand{\Hom}[0]{\operatorname{Hom}}
\newcommand{\NE}[1]{ \ensuremath{ \overline { \mbox{NE} }(#1)} }
\title{Normal split divisors in rational homogeneous spaces}
\date{\today}
\subjclass[2000]{14M22, 14E30, 14L99}
\keywords{rational homogeneous space, tangent sequence, normal sequence}
\author {Enrica Floris}
\author {Andreas H\"oring}
\address{Enrica Floris, Universit\'e de Poitiers, Laboratoire de Math\'ematiques et Applications,  UMR~CNRS 7348, Boulevard Marie et Pierre Curie, Site du Futuroscope, TSA 61125, 86073
Poitiers Cedex 9, France and Institut Universitaire de France}
\email{enrica.floris@univ-poitiers.fr}
\address{Andreas H\"oring, Universit\'e C\^ote d'Azur, CNRS, LJAD, France}
\email{Andreas.Hoering@univ-cotedazur.fr}
\begin{document}

\begin{abstract}
We show that a divisor in a rational homogenous variety 
with split normal sequence is the preimage of a hyperplane section in either the projective space or a quadric.
\end{abstract}

\maketitle

%\tableofcontents

\section{Introduction}

Let $X$ %\simeq G/P$ 
be a rational homogeneous space, 
and let $\holom{i}{M}{X}$ be a submanifold of dimension at least two\footnote{See Remark \ref{remark-curve-case} for a discussion of the case of curves.}. We say that
the $M$ is normal split in $X$ if the normal sequence 
$$
0 \rightarrow T_M \stackrel{{\rm d} i}{\rightarrow} T_X \otimes \sO_M \rightarrow N_{M/X} 
\rightarrow 0
$$ 
admits a splitting morphism $s: T_X \otimes \sO_M \rightarrow T_M$ such that $s \circ {\rm d}i = \id_{T_M}$. It is easy to see that $T_M$ is then globally generated and therefore $M$ is itself a homogeneous space. In our paper \cite{FH24} we used the contact structure
of $\PP(T_X)$ to show that $M$ is actually {\em rational homogeneous}, i.e. $M$ has no abelian factor. If one specifies the structure of $X$, much more can be shown:
\begin{itemize}
\item If $X \simeq \PP^n$, then $M$ is a linear subspace \cite{vdv59};
\item If $X \simeq Q^n \subset \PP^{n+1}$ is a quadric, then $M$ is a linear space or a linear section of $X$ \cite{Jah05}.
\item If $X$ is an irreducible Hermitian symmetric space, then so is $M$ \cite{Din22}.
\end{itemize}  

In view of these surprisingly short lists it is tempting to believe that normal split submanifolds (and their embeddings) can be classified up to projective equivalence.
In this paper we we build on an ingenious argument of Beauville and M\'erindol \cite{BeaMer87} to obtain a vast generalisation of the divisorial case in van de Ven's
and Jahnke's theorem:

\begin{theorem}\label{thm:NSdivisor} Let $X$ be a rational homogeneous space, and let
$M \subset X$ be a smooth prime divisor that is normal split.
Then there exists a locally trivial fibration
$\varphi: X \rightarrow U$ such that $U \simeq \PP^n$ or $U \simeq Q^n$ 
and $M$ is the pullback of a smooth divisor $N \subset U$ that is normal split.\\
Moreover, if $n\geq 3$ then $N$ is a hyperplane section and if $n=2$ then $N$ is either a hyperplane section or $Y=\PP^1\times\PP^1$ and there is a non-negative integer $a$ such that $N$ has bidegree $(1,a)$ or $(a,1)$.
\end{theorem}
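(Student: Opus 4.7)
The plan is to adapt the Beauville-M\'erindol vector field argument from the abelian setting of \cite{BeaMer87} to the rational homogeneous one. The replacement for the triviality of $T_X$ on an abelian variety is the global generation of $T_X$ by $V := H^0(X, T_X)$, the Lie algebra of $G := \Autz(X)$. From the splitting I would manufacture a family of translates of $M$ covering $X$, build a locally trivial fibration $\varphi : X \to U$ out of this family, and identify $U$ with $\PP^n$ or $Q^n$ using the prior classification results of van de Ven and Jahnke.

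Let $\pi_N : T_X|_M \twoheadrightarrow N_{M/X}$ be the projection provided by the splitting. Composing with $V \otimes \sO_M \twoheadrightarrow T_X|_M$ yields a linear map $\alpha : V \to H^0(M, N_{M/X})$ whose kernel
$$
V_0 = \{\xi \in V : \xi|_M \in H^0(M, T_M)\}
$$
is a Lie subalgebra of $V$. The associated closed connected subgroup $H \subset G$ is the stabiliser of $M$, so the orbit $G \cdot [M] \subset \Hilb(X)$ is isomorphic to $G/H$ and parameterises a family of translates $\{gM\}_{g \in G/H}$, each of which inherits a split normal sequence by translation. By homogeneity of $X$, these translates cover $X$.

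The main step is to upgrade this family to an honest fibration, i.e.\ to show that two general translates of $M$ are either equal or disjoint. The Beauville-M\'erindol technique is to extract from the splitting an integrable rank-one distribution on $X$ (spanned locally by a $V$-generated vector field complementary to $T_M$) and to interpret it as the tangent distribution to the fibres of the desired fibration. In the abelian case this distribution is automatically trivial, whereas here one must use the Lie bracket on $V$, the compatibility of the splittings among the $gM$, and the existence of $G$-equivariant Mori contractions of $X = G/P$. I expect that the resulting morphism $\varphi : X \to U$ can then be identified with an elementary contraction $G/P \to G/P'$ associated to a maximal parabolic $P' \supsetneq P$; local triviality is automatic from the homogeneity of the construction, and $M = \varphi^{-1}(N)$ holds for a smooth prime divisor $N \subset U$ whose induced normal sequence is again split.

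Finally, one chooses $P'$ minimally so that $N$ is not itself a pullback from a further contraction of $U$. The results of van de Ven \cite{vdv59}, Jahnke \cite{Jah05} and Ding \cite{Din22} then force $U$ to be $\PP^n$ or $Q^n$, and give the stated classification of $N$ as a hyperplane section. The case $U = Q^2 \simeq \PP^1 \times \PP^1$ is special precisely because the quadric splits as a product of $\PP^1$'s: a divisor of bidegree $(1,a)$ is normal split but not a hyperplane section, which is the source of the extra possibility in the statement. The principal obstacle in this plan is the integrability step for the distribution; since Beauville-M\'erindol's use of the trivialisation of $T_X$ does not transfer directly, the substitute argument must carefully combine global generation, the Lie-theoretic structure of $V$, and the explicit geometry of $G$-equivariant contractions of rational homogeneous spaces, and this is where the most delicate deformation-theoretic work is expected to lie.
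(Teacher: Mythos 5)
Your proposal has two genuine gaps, one of which is the entire content of the theorem. First, the reduction to $U$: the paper obtains $\varphi$ with no deformation theory at all, since on a homogeneous $X$ any effective divisor is base-point-free, so $\varphi$ is simply the Stein factorisation of $\varphi_{|M|}$, and $N$ is the (automatically ample) divisor with $M=\varphi^{-1}(N)$; the splitting of the normal sequence of $N$ in $U$ then follows from a diagram chase pushing $s$ down through ${\rm d}\varphi$. Your alternative route via translates and an integrable distribution does not work as stated: all translates $gM$ ($g\in\Autz(X)$) lie in the linear system $|M|$, and whenever $\kappa(X,M)\geq 2$ two general members of $|M|$ meet in a nonempty codimension-two locus (two hyperplanes in $\PP^n$ already do), so they are neither equal nor disjoint and cannot be the fibres of any fibration; moreover the fibres of the $\varphi$ you need have dimension $\dim X-n$, not $1$, so a rank-one distribution is the wrong object. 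You yourself flag the integrability step as unresolved.

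Second, and fatally, you conclude by saying that van de Ven, Jahnke and Ding ``force $U$ to be $\PP^n$ or $Q^n$.'' Those results classify normal split submanifolds of an ambient space \emph{already known} to be $\PP^n$, $Q^n$ or Hermitian symmetric; they say nothing about which rational homogeneous spaces $U$ can carry an ample normal split divisor. That statement is precisely the new and difficult part of the theorem (the paper's Theorem \ref{thm:NSampledivisor}), and its proof goes through entirely different machinery: the Beauville--M\'erindol Euler-vector-field argument (applied to the embedding by $|N|$, using that $U\subset\PP^N$ is cut out by quadrics) produces an involution of $U$ with fixed locus $N$, hence a degree-two cover $U\to U/\sigma$; when $\rho(U)=1$ the theorem of Hwang and Mok identifies $U/\sigma$ with $\PP^n$ and the Kobayashi--Ochiai criterion forces $U\simeq Q^n$; and the case $\rho(U)\geq 2$ is excluded by a detailed study of how $\sigma$ interacts with the Mori contractions of $U$ (including Koll\'ar's bound on sections of $\PP^1$-bundles and, in dimension three, the Mori--Mukai degree bound). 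None of this appears in your sketch, so the classification of $U$ and of $N$ is assumed rather than proved.
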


The most difficult case in the proof of this statement is when the divisor $M$ is ample. 
In this case we have $X=U$ and our goal is to show that $X$ is the projective space or a smooth quadric. The key observation is that the divisor $M \subset X$ satisfies
the technical conditions in the paper \cite{BeaMer87}; and therefore it is the fixed locus 
of an involution $\sigma$ on $X$. In particular
we have a two-to-one cover $X \rightarrow X/\sigma$. If
the Picard number of $X$ is one we conclude using a result of Hwang and Mok 
\cite{HM99}. If the Picard number is at least two we study the Mori contractions on $X$ to obtain a contradiction (unless $\dim X=2$ and $X$ is a quadric).

For the general situation note that 
a prime divisor $M$ in a rational homogeneous space $X$ always defines a basepoint free linear system and therefore induces a fibration 
$$
\varphi:= \varphi_{|M|}: X \rightarrow U
$$
onto a variety of dimension $n := \kappa(X, M)$.  Let $N \subset U$ be the smooth ample divisor such that  $M=\varphi^{-1}(N)$. 
We show that $N \subset U$ normal split and conclude with a case distinction
based on our analysis of the ample case.

{\bf Acknowledgements.} 
The authors are partially supported by the project ANR-23-CE40-0026 ``Positivity on K-trivial varieties''.
AH was supported by the France 2030 investment plan managed by the National Research Agency (ANR), as part of the Initiative of Excellence of Universit\'e C\^ote d'Azur under reference number ANR-15-IDEX-01.
EF was supported by the ANR Project FRACASSO ANR-22-CE40-0009-01 and the Institut Universitaire de France.

\section{Notation and basic facts}

We work over the complex numbers, for general definitions we refer to \cite{Har77}. 
Varieties will always be supposed to be irreducible and reduced. 
We use the terminology of \cite{Deb01, KM98}  for birational geometry and notions from the minimal model program. We follow \cite{Laz04a} for algebraic notions of positivity.

If $X$ is a projective variety, then $\mbox{Aut}^{\circ}(X)$ denotes the connected component of $\mbox{Aut}(X)$ containing the identity.

A fibration is a surjective projective morphism $f: X \rightarrow Y$ with connected fibres between normal varieties such that $\dim X>\dim Y$.

For a submanifold $i\colon M \hookrightarrow X$ of  a complex manifold $X$, we will simply denote the tangent map by
$$
\holom{{\rm d}i}{T_M}{T_X \otimes \sO_M}.
$$
We recall the three basic examples of normal split submanifolds:

\begin{example} \label{example-degree-two}
Let $\holom{\tau}{X}{Y}$ be a degree two branched covering between complex manifolds.
Then the ramification divisor $R \subset X$ is normal split. In fact the tangent map
${\rm d}\tau: T_X \rightarrow \tau^* T_Y$ has rank $\dim X-1$ along $R$ and its image
is $\tau^* T_B$ where $B \subset Y$ is the branch divisor.
Since $\tau|_R$ is an isomorphism onto $B$, the map
$$
T_X \otimes \sO_{R} \rightarrow \tau^* T_B \simeq T_R
$$
defines a splitting.
\end{example}

\begin{example} \label{example-fibration}
Let $\holom{\tau}{X}{Y}$ be a fibration between complex manifolds, and let
$\sigma: Y \rightarrow X$ be a section. Then $M:= \sigma(Y) \subset X$
is normal split. Indeed since $\tau \circ \sigma=\id_Y$ it is clear
that the restriction of the tangent map $T_\varphi$ to $M$ defines a splitting.

In particular if $X=\PP^1 \times \PP^1$ and $M \in |\sO_{\PP^1 \times \PP^1}(1,a)|$
is a smooth curve, then $M \subset X$ is normal split.
\end{example}

\begin{example} \label{example-fibres}
Let $\holom{f}{X}{Y}$ be a fibration between rational homogeneous manifolds, and let $F$ be a fibre of $f$.
Then $F$ is normal split. Indeed, since $X$ is homogeneous $f$ is smooth and its fibres are all biholomorphic. By the Fischer-Grauert theorem $f$ is locally trivial and thus $F$ is normal split.
In particular $F$ is rational homogeneous, for example by \cite{FH24}.
\end{example}

\section{Normal split divisors}

The goal of this section is to show the following key step towards Theorem \ref{thm:NSdivisor}:

\begin{theorem}\label{thm:NSampledivisor}
 Let $X$ be a rational homogeneous space of dimension $n\geq 3$, and let
$M \subset X$ be a smooth ample divisor that is normal split.
Then either $X \simeq \PP^n$ or $X \simeq Q^n$. In both cases $M$ is a hyperplane section.
\end{theorem}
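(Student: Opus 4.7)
The strategy is that outlined in the introduction: produce a holomorphic involution $\sigma$ on $X$ with fixed locus $M$ via the Beauville--M\'erindol trick, and then classify the resulting double cover $\pi\colon X\to X/\sigma$.

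First I would verify the hypotheses of \cite{BeaMer87}. Because $X$ is rational homogeneous, $T_X$ is globally generated, hence so is $T_X\otimes\sO_M$ and its direct summand $N_{M/X}$. Together with the splitting of the normal sequence this is the input needed to integrate the splitting to a biholomorphic involution $\sigma\in\Aut(X)$ whose fixed locus is exactly $M$. The quotient $Y:=X/\sigma$ is then smooth (the action is free away from $M$, and transverse at $M$), and $\pi\colon X\to Y$ is a double cover branched along the smooth ample divisor $B:=\pi(M)\simeq M$. By Hurwitz $K_X=\pi^*K_Y+M$, so $-K_Y$ is ample and $Y$ is a Fano manifold.

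Now I would split according to the Picard number of $X$. If $\rho(X)=1$, then $X=G/P$ with $P$ maximal parabolic, and the existence of a degree-two cover $\pi\colon X\to Y$ onto a Fano manifold is very restrictive; by the Hwang--Mok rigidity theorem \cite{HM99} this forces $X\simeq\PP^n$ or $X\simeq Q^n$. A direct computation using $\pi^*(-K_Y)=-K_X+M$ and the degree of $\pi$ then pins $M$ down as a hyperplane section. If $\rho(X)\ge 2$, I would use the elementary Mori contractions $\varphi_i\colon X\to X_i$ of $X=G/P$: the involution $\sigma$ either descends along each $\varphi_i$ or exchanges it with another contraction. Ampleness of $M$ ensures that for every positive-dimensional fiber $F$ of $\varphi_i$, the intersection $M\cap F$ is a smooth ample divisor in $F$; by Example \ref{example-fibres}, $F$ is itself rational homogeneous and, by restricting the splitting, $M\cap F\subset F$ is normal split. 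Applying the Picard number one case to $F$ (or to a factor in its de Rham decomposition), together with a combinatorial analysis of how $\sigma$ permutes the extremal rays of $X$, should produce a contradiction unless $\dim X\le 2$, which is excluded by $n\ge 3$.

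The decisive obstacle, as the introduction signals, is the case $\rho(X)\ge 2$. The involution construction of Beauville--M\'erindol and the Picard number one case via Hwang--Mok are essentially black boxes; the real work is controlling the interaction of $\sigma$ with the potentially many Mori contractions of $G/P$. One must argue that for a non-trivial contraction with positive-dimensional fibers, the structure forced on a generic fiber by induction is incompatible with the global existence of $\sigma$, and that the ``swap'' case among extremal rays also cannot occur, using that $M$ is ample and thus meets every fiber in a normal split ample divisor. Handling this case distinction uniformly across the classification of $G/P$ is the delicate part.
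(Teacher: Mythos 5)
Your overall strategy coincides with the paper's: construct an involution with fixed locus $M$ following Beauville--M\'erindol, settle Picard number one via Hwang--Mok, and rule out higher Picard number by analysing the Mori contractions of $X$. But there are two genuine gaps. First, your justification for invoking \cite{BeaMer87} is not the right one: global generation of $T_X\otimes\sO_M$ and of its direct summand $N_{M/X}\simeq\sO_M(M)$ is not the hypothesis of their construction. What is actually needed is that $|M|$ is very ample and embeds $X$ into $\PP^N$ as an \emph{intersection of quadrics}; this is supplied by Ramanathan's theorem for rational homogeneous spaces. Given that, the splitting $N_{M/X}\to T_X\otimes\sO_M$ becomes a section of $T_{\PP^N}(-1)\otimes\sO_M$, hence (by Kodaira vanishing and the Euler sequence, using $\dim M\ge 2$) the restriction of an Euler vector field $l(x)\,\partial/\partial X_0$; the involution is the linear involution of $\PP^N$ fixing the hyperplane $H$ and the point $p=(1:0:\cdots:0)$, and one must still check $p\notin X$ --- this is exactly where the case $(X,\sO_X(M))\simeq(\PP^n,\sO_{\PP^n}(1))$ has to be excluded beforehand. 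None of this follows from positivity of the normal bundle alone, so as written the existence of $\sigma$ is unestablished. (In the $\rho=1$ case, note also that Hwang--Mok only gives $Y\simeq\PP^n$; identifying $X$ as a quadric still requires showing the branch divisor is a homogeneous hypersurface divisible by two in $\pic(\PP^n)$ and then applying Kobayashi--Ochiai.)

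Second, and more seriously, the case $\rho(X)\ge 2$ --- which you yourself identify as the decisive obstacle --- is left as a programme rather than a proof, and it is the bulk of the paper's argument. The ``swap'' case ($\sigma_*$ moving an extremal ray) is handled by embedding $X$ into $Z_1\times Z_2$ via the two contractions, observing that $M$ lands inside the fixed locus of the induced involution, which is isomorphic to a single factor, and then contradicting the Lefschetz hyperplane theorem for $\dim X\ge 3$. The ``descent'' case requires proving that $-K_X-M$ is ample; this is itself delicate, resting on showing that the induced map $\bar\tau\colon T\to U$ between the bases of the corresponding contractions of $X$ and $Y$ is an isomorphism (via purity of branch, irreducibility and reducedness of $g^*B$, and Koll\'ar's bound on sections of $\PP^1$-bundles over homogeneous bases), and only then does the restriction-to-fibres argument you sketch (applying the Picard-number-one case to a fibre $F$ with $\rho(F)=1$) go through. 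One then still needs Sommese's inequality $\dim X\ge 2\dim T$, the product decomposition $X\simeq T_1\times T_2$, a fixed-locus dimension count, and a separate three-dimensional analysis using the Mori--Mukai bound $(-K_Y)^3\le 64$. Your sketch points in the right direction, but without these arguments the theorem is not proved.
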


\subsection{Involutions and the case of Picard rank 1}

We reformulate results in \cite{BeaMer87} in the following proposition.

\begin{proposition}\label{pro:BeaMer}
 Let $X$ be a rational homogeneous space of dimension $n\geq 3$ and let
$M \subset X$ be a smooth ample divisor that is normal split.
Assume that $(X, \sO_X(M))$ is not $(\PP^n, \sO_{\PP^n}(1))$.
Then there is an involution of $X$ with fixed locus $M$.
\end{proposition}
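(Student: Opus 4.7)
The plan is to verify that our hypotheses match those of the construction of Beauville and M\'erindol. The core of their argument starts from the splitting $s: T_X \otimes \sO_M \rightarrow T_M$, which canonically produces an involution on the first infinitesimal neighborhood $M^{(1)} \subset X$: it is the identity on $M$ and acts by $-1$ on the normal direction $N_{M/X}$. The proof reduces to two main tasks: extend this first-order involution to a formal involution on the completion of $X$ along $M$, and then algebraize it to a global involution of $X$.

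The extension step is controlled by obstructions in cohomology groups of the form $H^1(M, T_M \otimes (N_{M/X}^{\vee})^{\otimes k})$ for $k \geq 1$. Under our hypotheses, $N_{M/X} = \sO_X(M)|_M$ is ample and $M$ is itself rational homogeneous (the splitting realises $T_M$ as a direct summand of the globally generated $T_X \otimes \sO_M$, cf.\ \cite{FH24}). A Kodaira--Akizuki--Nakano type vanishing on the rational homogeneous manifold $M$ with the ample twist $N_{M/X}$ then kills these obstructions, exactly as in the BeaMer setting. The algebraization step uses the ampleness of $M$: by Grothendieck's formal GAGA the formal completion of $X$ along $M$ determines $X$, so formal automorphisms extend to honest automorphisms of $X$. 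This produces an involution $\sigma: X \rightarrow X$.

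By construction $\sigma$ acts as $-1$ on $N_{M/X}$, so $M$ is a smooth divisorial component of the fixed locus of $\sigma$. The rational homogeneity of $X$ together with the ampleness of $M$ force the fixed locus of $\sigma$ to coincide with $M$, except precisely in the excluded case $(X, \sO_X(M)) = (\PP^n, \sO_{\PP^n}(1))$: the reflection of $\PP^n$ along a hyperplane picks up an additional isolated fixed point, so its fixed locus is strictly larger than the hyperplane, and the construction cannot produce an involution with fixed locus exactly equal to $M$. The main obstacle I anticipate is the clean verification of the cohomological vanishing of the obstruction groups in every order, which is the technical heart of the Beauville--M\'erindol machinery and is the step that must be referenced most carefully.
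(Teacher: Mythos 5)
Your proposal does not follow the actual Beauville--M\'erindol argument, and as written it has genuine gaps. The paper's route is projective-geometric: by Ramanan's theorem the ring $\bigoplus_k H^0(X,\sO_X(kM))$ is generated in degree one, so $|M|$ embeds $X$ into $\PP^N$ as an \emph{intersection of quadrics} with $M = X\cap H$; the splitting then produces (via the Euler sequence and Kodaira vanishing on $M$) a global Euler vector field $l(x)\,\partial/\partial X_0$ whose restriction to $M$ is tangent to $X$, which forces every embedded tangent space $\PP(T_{X,x})$, $x\in M$, to pass through the point $p=(1:0:\cdots:0)$; the linear involution of $\PP^N$ fixing $H$ and $p$ then preserves every quadric through $X$, hence preserves $X$, and one checks $p\notin X$. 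Your proposal never uses the intersection-of-quadrics input at all, and instead sketches a formal-neighbourhood argument that is not the one in \cite{BeaMer87}.

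The concrete gaps in your version are the following. First, the obstruction spaces for extending the order-one involution to higher infinitesimal neighbourhoods involve \emph{negative} twists: they are of the shape $H^1(M, T_X|_M\otimes \Sym^{k}N_{M/X}^{\vee})$, i.e.\ twists by $\sO_M(-kM)$, not by an ample line bundle. Kodaira--Akizuki--Nakano gives $H^q(\Omega^p\otimes L)=0$ for $L$ ample and $p+q>\dim$, which (even after Serre duality) does not cover these groups; their vanishing for all $k$ on an arbitrary rational homogeneous $M$ is a substantial unproven assertion, and it is precisely to avoid this that Beauville--M\'erindol work with the ambient quadrics. Second, your final step --- ``rational homogeneity of $X$ together with ampleness of $M$ force the fixed locus to coincide with $M$'' --- is not justified: an involution constructed formally along $M$ could a priori have further fixed components away from $M$. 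In the paper this is handled explicitly: the only other candidate fixed point is the single point $p$, and the hypothesis $(X,\sO_X(M))\neq(\PP^n,\sO_{\PP^n}(1))$ is used twice in a quite different way than you suggest --- once to show $p\notin M$ (via the classical characterisation of lines through tangent directions, \cite[IV, Prop.~3.9]{Har77}) and once to show $p\notin X$ (otherwise $X$ would be a cone, contradicting smoothness). You would need to supply both the all-orders vanishing and the fixed-locus identification to make your route work.
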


\begin{proof}
By \cite[Theorem 3.11]{Ram87} the ring of global sections $\bigoplus_{k \in \N} H^0(X, \sO_X(kM))$ is generated in degree one, so $M$ is very ample and defines an embedding
$$
i : X \hookrightarrow \PP^N
$$
such that $M = X \cap H$ with $H \subset \PP^N$ a hyperplane.
By \cite[Theorem 3.11(i), Remark 3.12]{Ram87} the variety $X \subset \PP^N$ is an intersection of quadrics, so we can apply \cite[Corollaire 2, Remarque (3)]{BeaMer87}. For the convenience of the reader let us sketch their remarkable argument: 
note first that the splitting map $s_M: T_X \otimes \sO_M \rightarrow T_M$ 
induces a map $N_{M/X} \rightarrow T_X \otimes \sO_M$.
Since $M \in |\sO_X(1)|$ the splitting of the normal sequence is thus given by a non-zero morphism $\holom{j}{\sO_M(1)}{T_X \otimes \sO_M}$.
Consider the exact sequence
$$
0 \rightarrow T_X(-1) \otimes \sO_M \rightarrow T_{\PP^N}(-1) \otimes \sO_M
\stackrel{u}{\rightarrow} N_{X/\PP^N}(-1) \otimes \sO_M \rightarrow 0
$$
The section $j$ induces a section $s \in H^0(M, T_{\PP^N}(-1) \otimes \sO_M)$
such that $u(s)=0$. Now consider the twisted Euler sequence restricted to $M$:
$$
0 \rightarrow \sO_M(-1) \rightarrow \sO_M^{\oplus N+1} \rightarrow T_{\PP^N}(-1) \otimes \sO_M \rightarrow 0.
$$
Since $\dim M \geq 2$ we have by Kodaira's theorem $H^0(M, \sO_M(-1))=H^1(M, \sO_M(-1))=0$. Thus the restriction map
$$
H^0(M, \sO_M^{\oplus N+1}) \rightarrow H^0(M, T_{\PP^N}(-1) \otimes \sO_M)
$$
is an isomorphism. Repeating the argument for the twisted Euler sequence we obtain an isomorphism
$$
H^0(\PP^N, T_{\PP^N}(-1)) \cong  H^0(M, T_{\PP^N}(-1) \otimes \sO_M).
$$
Thus up to linear coordinate change the section $s$ is the restriction of
an Euler vector field $l(x) \frac{\partial}{\partial X_0}$ with $l$ a linear form.

{\em Step 1. We claim that if $x \in M$, then the projective tangent space $\PP(T_{X,x}) 
\subset \PP^N$ passes through the point $p:= (1:0: \ldots:0)$.}

Let $V \subset \PP^N$ be the vanishing set of the Euler vector field, i.e.
the union of the hyperplane $l=0$ and a point.
Note that $M \not\subset V$ since otherwise $s=0$. In order to show the claim we can assume without loss of generality that $x \in (M \setminus V)$, the general case will just follows by passing to the limit.

We can choose homogeneous polynomials $F_1, \ldots, F_k$ vanishing on $X$ such that
$\PP(T_{X,x}) = \cap_{j=1}^k \PP(T_{V(F_j), x})$ where 
$\PP(T_{V(F_j), x}) \subset \PP^N$ is the hyperplane defined by the equation
$\sum_{i=0}^N \frac{\partial F_j}{\partial X_i}(x) X_i=0$.
It is clearly sufficient
to show that 
$$
p \in \PP(T_{V(F_j), x}).
$$
Yet this is equivalent to showing that $\frac{\partial F_j}{\partial X_0}(x)=0$.
Since $x \not\in V$, the vector field  $l(x) \frac{\partial}{\partial X_0}$
does not vanish in a neighbourhood of $x$. Yet the condition $u(s)=0$ is equivalent
$s \in H^0(M, T_X(-1) \otimes \sO_M)$, so the
derivation $\frac{\partial F_j}{\partial X_0}(x)$ vanishes.

{\em Step 2. We have $p \not\in M$.}
We argue by contradiction. 
By Step 1 we have $p \in \PP(T_{X,x})$ for every $x \in M$.
Since $M = X \cap H$ is smooth we have $\PP(T_{M,x}) = \PP(T_{X,x}) \cap H$
for every $x \in M$. Thus we have
$p \in \PP(T_{M,x})$ for every $x \in M$.
Take now $H_1, \ldots, H_{\dim M-1}$ general hyperplane sections passing through
$p$. Then $C:= M \cap H_1 \cap \ldots H_{\dim M-1}$
is a smooth curve such that $p \in \PP(T_{C,x})$ for every $x \in C$.
By \cite[IV, Prop.3.9]{Har77} the curve $C$ is a line and therefore $(X, \sO_X(M)) \simeq (\PP^n, \sO_{\PP^n}(1))$, a contradiction to our assumption.

{\em Step 3. Conclusion.}
Let $\holom{\sigma}{\PP^N}{\PP^N}$ be the involution fixing pointwise the
point $p$ and the hyperplane $H$. Let $Q \subset \PP^N$
be a quadric that contains $X$, then $\sigma(Q)=Q$ by \cite[Cor.1]{BeaMer87}.
Since $X$ is an intersection of quadrics we obtain
that $\sigma(X)=X$ and therefore the sought involution is $\sigma_X:=\sigma\vert_X$. 
We are left to show that the fixed locus of $\sigma_X$ is $M$, i.e.
we have to show that $p \not\in X$.
Yet again by \cite[Cor.1]{BeaMer87} a quadric $Q$ with $X \subset Q$
and $p \in Q$ is singular in $p$. Thus $X$ would be an intersection of quadric cones
with vertex the point $p$, so itself a cone. Yet $X$ is smooth,
so we get again $(X, \sO_X(M)) \simeq (\PP^n, \sO_{\PP^n}(1))$, the final contradiction.

\end{proof}

\bigskip

It is well-known that if $B \subset \PP^N$ is a smooth hypersurface, the variety $B$
is rational homogeneous if and only if $\deg B \leq 2$. We will now combine this basis fact with a difficult result of Hwang and Mok:

\begin{corollary}\label{cor:Q2proj}
Let $X$ be a rational homogeneous space of dimension $n$ and Picard number 1.
Let $\tau\colon X\to Y$ be a degree two finite morphism ramified along a smooth divisor $H$ of $X$. Then $X$ is a smooth quadric
\end{corollary}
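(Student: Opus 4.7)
The plan is to recognize the ramification divisor $H$ as a smooth ample normal split divisor and then invoke a rigidity theorem of Hwang--Mok together with a direct analysis of involutions of projective space.

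First, by Example \ref{example-degree-two} the ramification divisor $H$ is normal split in $X$; since $X$ has Picard number one and $H$ is a nonzero effective divisor, $H$ is ample. Because $\tau$ has degree two it is automatically Galois, with deck transformation an involution $\sigma \in \Aut(X)$ whose fixed locus is precisely $H$. So $X$ carries an involution with a smooth ample normal split divisor as fixed locus, exactly the kind of rigid structure that is very restrictive on a rational homogeneous space of Picard number one.

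The central step is to invoke the rigidity theorem of Hwang and Mok \cite{HM99}, which asserts (in the form we need) that a rational homogeneous space of Picard number one admitting a finite surjective morphism onto a normal projective variety that is not an isomorphism must be either $\PP^n$ or $Q^n$. Since $\tau$ has degree two, it is not an isomorphism, and therefore $X \simeq \PP^n$ or $X \simeq Q^n$.

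It remains to rule out $X \simeq \PP^n$. Every involution of $\PP^n$ is induced by a linear involution of $\C^{n+1}$, so its fixed locus is a disjoint union of two complementary linear subspaces. If one of these subspaces is the hyperplane $H$, the other is necessarily an isolated point $p \notin H$, and the quotient $\PP^n/\langle \sigma \rangle$ acquires a singularity at the image of $p$; in particular the cover $\tau$ is ramified at $p$ as well, contradicting the hypothesis that $\tau$ is ramified along the smooth divisor $H$. Hence $X \simeq Q^n$. The main obstacle is identifying and applying the correct form of the Hwang--Mok theorem in \cite{HM99}, which we use here as a black box; the remaining steps are then straightforward consequences of the linearity of automorphisms of projective space.
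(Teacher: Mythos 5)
Your central step rests on a misstatement of the Hwang--Mok theorem, and this is a genuine gap. The Main Theorem of \cite{HM99} constrains the \emph{target} of the morphism, not the source: it says that if $X$ is rational homogeneous of Picard number one and $f\colon X\to Y$ is a surjective holomorphic map onto a positive-dimensional projective manifold which is not a biholomorphism, then $Y\simeq \PP^{\dim Y}$. The version you invoke --- that the \emph{source} $X$ must be $\PP^n$ or $Q^n$ as soon as it admits a non-isomorphic finite surjective morphism --- is false: any rational homogeneous space of Picard number one (a Grassmannian, say) admits a finite surjective morphism onto $\PP^n$ by generic linear projection from a projective embedding, so your statement would force every such space to be $\PP^n$ or $Q^n$. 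Consequently the dichotomy $X\simeq\PP^n$ or $X\simeq Q^n$ does not follow from what you cite, and the subsequent elimination of $\PP^n$ via linear involutions, while reasonable in itself, is built on a conclusion you have not established.

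What the correct form of \cite{HM99} gives is $Y\simeq\PP^n$, and from there the corollary still requires real work to identify $X$. Your opening observation is fine and matches the paper: $H$ is normal split by Example \ref{example-degree-two}, hence rational homogeneous. The paper uses this to see that the branch locus $B\simeq H$ is a homogeneous hypersurface in $\PP^n\simeq Y$, hence of degree at most two; since $\sO_{\PP^n}(B)$ must be a square in $\pic(\PP^n)$, the divisor $B$ is a smooth quadric. The ramification formula for the resulting double cover of $\PP^n$ branched along a quadric then shows that $-K_X$ is divisible by $n$ in $\pic(X)$, and the Kobayashi--Ochiai criterion identifies $X$ as a quadric. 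These two steps --- pinning down the branch divisor via homogeneity and concluding with Kobayashi--Ochiai --- are the actual content of the proof and are absent from your argument.
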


\begin{proof}
By \cite[Main Thm.]{HM99} the manifold $Y$ is isomorphic to $\mathbb P^n$.
The ramification divisor is a rational homogeneous space because it is normal split in $X$. Thus the branch locus $B \simeq R \subseteq \mathbb P^n$ is homogeneous
and therefore of degree at most two.
Moreover $B$ is divisible by 2 in $\pic(\mathbb P^n)$, so $B$ is a smooth quadric in $\mathbb P^n$.
The covering $\tau$ is thus induced by the data $\O_{\PP^n}(H)^{\otimes 2}\simeq \O_{\PP^n}(B)$,
where $H$ is the hyperplane class. 
The ramification formula
$K_X=\tau^*(-(n+1)H+H)$ implies that $-K_X$ is divisible by  $n$.
By the Kobayashi-Ochiai criterion this shows that $X$ is a quadric.
\end{proof}

\begin{corollary}\label{cor:Pic1}
Let $X$ be a rational homogeneous space of dimension $n \geq 3$  of Picard number 1, and let
$M \subset X$ be a smooth ample divisor that is normal split.
Then either $X \simeq \PP^n$ or $X \simeq Q^n$ and $M$ is a hyperplane section.
\end{corollary}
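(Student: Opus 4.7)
The plan is to combine Proposition \ref{pro:BeaMer} and Corollary \ref{cor:Q2proj}, which together do almost all the work.

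First, I would dispose of the case $(X,\O_X(M))\simeq(\PP^n,\O_{\PP^n}(1))$, in which the conclusion is immediate. In the opposite case, Proposition \ref{pro:BeaMer} supplies an involution $\sigma\colon X\to X$ whose fixed locus is precisely $M$. Since this fixed locus is a smooth divisor, $\sigma$ admits the local normal form $(x_1,\ldots,x_n)\mapsto (-x_1,x_2,\ldots,x_n)$ at each fixed point, so the quotient $Y:=X/\langle\sigma\rangle$ is a smooth projective variety and the natural morphism $\tau\colon X\to Y$ is a finite double cover ramified exactly along $M$. Applying Corollary \ref{cor:Q2proj} with this data, together with $n\geq 3$ and the Picard number one assumption, forces $X\simeq Q^n$ (and $Y\simeq \PP^n$).

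To identify $M$ with a hyperplane section of $Q^n$, I would use that for $n\geq 3$ the group $\pic(Q^n)$ is freely generated by the hyperplane class $h$, with self-intersection $h^n=2$. The proof of Corollary \ref{cor:Q2proj} shows that the branch locus $B\subset \PP^n$ is a smooth quadric, so $B\sim 2H$ with $H$ the hyperplane class of $\PP^n$. Writing $\tau^*H\sim eh$ in $\pic(Q^n)$, the projection formula gives $(\tau^*H)^n=\deg(\tau)\cdot H^n=2$, hence $2e^n=2$ and $e=1$. The standard ramification identity $2M\sim \tau^*B$ then reads $2M\sim 2h$, and torsion-freeness of $\pic(Q^n)$ forces $M\sim h$, as desired.

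I do not anticipate any real obstacle. The only point requiring a moment's care is verifying that the quotient $Y$ is smooth, so that the hypothesis of Corollary \ref{cor:Q2proj} is genuinely met; this is immediate from the local linearisation of an involution with smooth codimension-one fixed locus.
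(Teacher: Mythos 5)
Your argument is correct and follows essentially the same route as the paper: dispose of the pair $(\PP^n,\O_{\PP^n}(1))$, invoke Proposition \ref{pro:BeaMer} to produce the involution with fixed locus $M$, and feed the resulting double cover into Corollary \ref{cor:Q2proj}. You merely make explicit two points the paper leaves implicit --- the smoothness of the quotient $X/\langle\sigma\rangle$ (needed so that Corollary \ref{cor:Q2proj} applies) and the Picard-group computation identifying $M$ with a hyperplane section of $Q^n$ --- both of which are handled correctly.
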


\begin{proof}
If $X \simeq \PP^n$ this is van den Ven's theorem, so assume 
$X \not\simeq \PP^n$.
By Proposition \ref{pro:BeaMer} there is an involution $\sigma\colon X\to X$
whose fixed locus is $M$.
By Corollary \ref{cor:Q2proj} the variety $X$ is a smooth quadric and $M$ a hyperplane section.
\end{proof}

\subsection{Case of higher Picard rank}

The goal of this subsection is to show that in the setup of Theorem \ref{thm:NSampledivisor} the Picard number of $X$ is one. If $\rho(X)>1$ the Proposition \ref{pro:BeaMer} 
still provides an involution $\sigma: X \rightarrow X$, but the theorem of Hwang and Mok \cite{HM99} does not apply. Thus we study the Mori fibrations on $X$
and their interaction with the involution $\sigma$ to obtain a contradiction. 
Note that Chi-Hin Lau \cite{Lau09} generalised \cite{HM99} to rational homogeneous spaces $X=G/P$ with $G$ a {\em simple} Lie group, thereby providing an alternative approach to this setting.

\begin{proposition} \label{proposition-contractions}
Let $X$ be a rational homogeneous space of Picard number at least two, 
and let $M \subset X$ be a smooth ample divisor that is normal split.
\begin{itemize}
\item If $\dim X=2$ (and hence $X \simeq \PP^1 \times \PP^1$), then $M$ is a section
of one of the rulings.
\item If $\dim X \geq 3$, there exists an elementary Mori contraction $X \rightarrow Y$ induced by an extremal ray $\Gamma$ such that
$(K_X+M) \cdot \Gamma<0$.
Moreover, for each contraction of this type we have $\dim X \geq 2 \dim Y$.
\end{itemize}
\end{proposition}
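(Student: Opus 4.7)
I treat the surface case and the higher-dimensional case separately.

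\emph{The surface case.} The only smooth rational homogeneous surface of Picard number at least two is $\PP^1\times\PP^1$, so I may assume $X\simeq\PP^1\times\PP^1$ and write $M\in|\sO(a,b)|$ with $a,b\geq 1$. Since $T_X\simeq\sO(2,0)\oplus\sO(0,2)$, I would decompose any splitting $s=(s_1,s_2)$ of the normal sequence as a pair of sections of $T_M\otimes\sO(-2,0)|_M$ and $T_M\otimes\sO(0,-2)|_M$. Using $\deg T_M=2(a+b-ab)$, these line bundles have respective degrees $2a(1-b)$ and $2b(1-a)$; if both $a\geq 2$ and $b\geq 2$ both degrees are strictly negative, so $s_1$ and $s_2$ must vanish, contradicting $s\circ\mathrm{d}i=\id$. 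Hence $a=1$ or $b=1$, and $M$ is a section of the corresponding projection.

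\emph{Dimension at least three, existence of $\Gamma$.} Since $M$ is rational homogeneous by \cite{FH24}, it is Fano, so I can pick a rational curve $C\subset M$ with $K_M\cdot C<0$. By adjunction $(K_X+M)\cdot C=K_M\cdot C<0$, so $K_X+M$ is not nef. As $X$ is Fano, Mori's cone theorem then furnishes a $K_X$-negative extremal ray $\Gamma$ with $(K_X+M)\cdot\Gamma<0$.

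\emph{Dimension at least three, the inequality $\dim X\geq 2\dim Y$.} This will be the main obstacle. For a contraction $\phi\colon X\to Y$ as above, I would use the involution $\sigma\in\Aut(X)$ of Proposition~\ref{pro:BeaMer} with $\mathrm{Fix}(\sigma)=M$ (the exceptional case $(X,\sO_X(M))=(\PP^n,\sO(1))$ is excluded since $\rho(X)\geq 2$). As $\sigma^*(K_X+M)=K_X+M$, the involution permutes the $(K_X+M)$-negative extremal rays. If $\sigma^*\Gamma=\Gamma$, then $\phi$ is $\sigma$-equivariant; surjectivity of $\phi|_M$ (ampleness of $M$) together with $\sigma|_M=\id$ forces the induced involution on $Y$ to be trivial, so $\sigma$ acts fibrewise. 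A general fibre $F$ is then smooth rational homogeneous, and $\sigma|_F$ has fixed locus $F\cap M$, a smooth ample divisor which is normal split in $F$ by Example~\ref{example-degree-two}. Induction on $\dim X$ using Theorem~\ref{thm:NSampledivisor} would give $F\simeq\PP^{\dim F}$ or $F\simeq Q^{\dim F}$; the first case is excluded because a linear involution of $\PP^d$ fixing a hyperplane has an extra isolated pole lying outside that hyperplane, hence outside $M$, contradicting $\mathrm{Fix}(\sigma)=M$. If instead $\sigma^*\Gamma\neq\Gamma$, then $\sigma$ identifies $\phi$ with a second extremal contraction $\phi'\colon X\to Y'$ via an isomorphism $Y\simeq Y'$. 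To conclude the dimensional inequality I would then introduce the product map $\Phi=(\phi,\phi\circ\sigma)\colon X\to Y\times Y$ (after identifying $Y'\simeq Y$): it is equivariant for the swap involution on $Y\times Y$ and satisfies $\Phi(M)=\Delta_Y$. In the non-invariant case one should show $\Phi$ is dominant onto $Y\times Y$, giving $\dim X\geq 2\dim Y$ at once, whereas in the $\sigma$-invariant case one must exploit the quadric-bundle structure of $\phi$ to force $\dim Y\leq\dim F$. I expect this last step to be the hardest, requiring a careful analysis of how a fibrewise reflection fixing an ample smooth divisor constrains the base of a quadric bundle.
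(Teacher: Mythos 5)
Your surface case and the existence of the ray $\Gamma$ are fine. The surface argument is in fact a pleasant alternative to the paper's: you split $T_{\PP^1\times\PP^1}=\sO(2,0)\oplus\sO(0,2)$ and kill both components of $s$ by a degree count when $a,b\geq 2$, whereas the paper deduces $a=1$ or $b=1$ from $(K_X+M)\cdot\Gamma<0$ and then invokes Koll\'ar's lemma \cite[Appendix, Lemma (1)]{Bor91} on ample sections of $\PP^1$-bundles. The existence of $\Gamma$ in dimension $\geq 3$ is argued exactly as in the paper ($K_M=(K_X+M)|_M$ not nef, cone theorem on the Fano manifold $X$).

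The inequality $\dim X\geq 2\dim Y$ is where your proposal has a genuine gap, and you acknowledge as much. The paper's proof is short and does not use the involution of Proposition \ref{pro:BeaMer} at all: first, a fibre dimension equal to one is impossible when $\dim X\geq 3$, because $(K_X+M)\cdot F<0$ forces $M\cdot F=1$, so the homogeneous divisor $M$ is a section of the $\PP^1$-bundle $f$, and an \emph{ample} section forces $\dim Y\leq 1$ by \cite[Appendix, Lemma (1)]{Bor91}; second, when the fibres have dimension $\geq 2$ the restriction $f|_M\colon M\to Y$ is a fibration (connectedness of $F\cap M$ by ampleness) of maximal rank (homogeneity of $M$), and Sommese's theorem \cite[Prop.V]{Som76} on fibrations of ample divisors yields $\dim X\geq 2\dim Y$ directly. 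Your substitute strategy does not close: in the branch $\sigma^*\Gamma\neq\Gamma$, the map $\Phi=(\phi,\phi\circ\sigma)$ has finite fibres (fibres of distinct extremal contractions meet in finitely many points, cf.\ Lemma \ref{lemma-embedding}), so it is finite onto its image and gives $\dim X\leq 2\dim Y$ --- the \emph{reverse} of the desired inequality --- and there is no reason for it to be dominant; in the branch $\sigma^*\Gamma=\Gamma$ you leave the case $F\simeq Q^{\dim F}$ unresolved, your induction via Theorem \ref{thm:NSampledivisor} needs $\dim F\geq 3$ and is structurally delicate since that theorem is what this proposition feeds into, and the case $\dim F=1$ (where $F\cap M$ is zero-dimensional) is not treated at all. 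You should replace this part by the Koll\'ar/Sommese argument.
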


\begin{proof}
Without any assumption on the dimension, observe first that $K_M= (K_X+M)|_M$ is not nef
since the normal split divisor $M \subset X$ is rational homogeneous \cite{FH24}.
Applying the cone theorem to the Fano manifold $X$ we obtain
that there exists an extremal ray $\Gamma$ such that $(K_X+M) \cdot \Gamma<0$, denote the corresponding contraction by $f: X \rightarrow Y$.

{\em 1st case. Assume that $\dim X-\dim Y=1$.} Then the general fibre $F$ is $\PP^1$,
so $K_X \cdot F=-2$ and $M \cdot F>0$ implies that $M \cdot F=1$.
Thus $M$ is a rational section of $f$. Since $M$ is homogeneous the birational
morphism $M \rightarrow Y$ is an isomorphism, hence $M$ is even a section.
By \cite[Appendix, Lemma (1)]{Bor91} this implies that $\dim X \leq 2$.
Thus we have $\dim X=2$ and $M$ is a section of the ruling $f$.

{\em 2nd case. Assume that $\dim X-\dim Y>1$.}
Note that this implies $\dim X>2$ since otherwise we would have $\rho(X)=1$.
Let $F$ be a general fibre of $F$. 
Since $(F \cap M) \subset F$ is an ample divisor of dimension at least one, it is connected.
Thus $f$ induces a fibration $\holom{f|_M}{M}{Z}$. Since $M$ is rational homogeneous the fibration $f|_M$ has maximal rank. Thus a theorem of Sommese \cite[Prop.V]{Som76} implies
that $\dim X \geq 2 \dim Y$.
\end{proof}

\begin{lemma}\label{lem:monodromy}
Let $X$ be a rational homogeneous space and let $f\colon X\to Z$ be a fibration.
Then for a fibre $F$ one has $\rho(F)=\rho(X)-\rho(Z)$.
\end{lemma}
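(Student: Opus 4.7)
The plan is to reduce to a $G$-equivariant situation by way of Blanchard's lemma, and then compare Picard ranks either combinatorially or via the Leray spectral sequence.

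Writing $X = G/P$ with $G$ a semisimple algebraic group and $P$ parabolic, the first step is to see that $f$ is forced to be $G$-equivariant. This should follow from Blanchard's lemma: the transitive action of $\Autz(X) \supseteq G$ on $X$ descends through the fibration $f$ to a regular action on the normal projective variety $Z$, which remains transitive since $f$ is surjective. Hence $Z \cong G/Q$ for some closed subgroup $Q \supseteq P$, and projectivity of $Z$ forces $Q$ to be parabolic. In particular $Z$ is rational homogeneous, all fibres of $f$ are isomorphic to the rational homogeneous variety $F = Q/P$, and $f$ is a locally trivial fibration.

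Once this reduction is in place, the equality can be extracted from the Leray spectral sequence for $f$ with rational coefficients. Since $Z$ is simply connected and $f$ is locally trivial, one has
$$
E_2^{p,q} = H^p(Z,\Q) \otimes H^q(F,\Q).
$$
All odd Betti numbers of $Z$ and $F$ vanish, so every higher differential is zero for parity reasons and the sequence degenerates at $E_2$. Reading off total degree two gives $H^2(X,\Q) \cong H^2(Z,\Q) \oplus H^2(F,\Q)$. Since $H^i(Y,\O_Y) = 0$ for $i > 0$ on any rational homogeneous space $Y$, the exponential sequence identifies $\pic(Y) \otimes \Q$ with $H^2(Y,\Q)$, and taking dimensions yields $\rho(X) = \rho(Z) + \rho(F)$.

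The main obstacle is the first step: without the reduction to an equivariant fibration there is no a priori reason for $Z$ to be rational homogeneous (or even smooth), and then neither the Leray argument nor any root-system computation applies. An alternative to the Leray part, once the reduction is done, is to count simple roots directly: $\rho(G/P)$ equals $|\Delta \setminus \Delta_P|$, and the decomposition $|\Delta \setminus \Delta_P| = |\Delta \setminus \Delta_Q| + |\Delta_Q \setminus \Delta_P|$ gives the identity at once.
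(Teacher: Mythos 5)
Your proof is correct, but it takes a genuinely different route from the one in the paper. The paper's argument is very short: since $X$ is homogeneous the fibration $f$ is smooth, so by \cite[Thm.2.2]{CFST16} the relative N\'eron--Severi local system $R^1f_*\mathbb{G}_m\otimes\mathbb{Q}$ on $Z$ has finite monodromy; as $Z$ is simply connected this local system is trivial, and the rank count follows. You instead first use Blanchard's lemma to make everything equivariant, so that $Z\cong G/Q$ with $P\subseteq Q$ parabolic, every fibre is the rational homogeneous variety $Q/P$, and $f$ is locally trivial; you then conclude either via the Leray spectral sequence (which indeed degenerates for parity reasons, all cohomology being concentrated in even degrees) or via the root count $|\Delta\setminus\Delta_P|=|\Delta\setminus\Delta_Q|+|\Delta_Q\setminus\Delta_P|$. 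Your reduction is sound --- Blanchard's lemma is already in the paper's toolkit (see Lemma \ref{lemma-embedding}), and transitivity of the induced $G$-action on $Z$ does force the stabiliser $Q$ to be parabolic --- and it delivers more than the lemma asks for: local triviality, homogeneity of $Z$ and of all (isomorphic) fibres, facts the paper needs and justifies separately elsewhere (Example \ref{example-fibres}, proof of Lemma \ref{lem:Pic3}). What the paper's citation buys is brevity and a statement valid for any smooth fibration without invoking equivariance; in particular the paper never needs to identify $Z$ as a homogeneous space, only to use its simple connectedness. The one step you should spell out is the identification $\rho(Y)=\dim_{\mathbb{Q}}H^2(Y,\mathbb{Q})$ for $Y\in\{X,Z,F\}$: it uses both $H^1(Y,\sO_Y)=H^2(Y,\sO_Y)=0$ (Kodaira vanishing on a Fano) and the coincidence of numerical and $\mathbb{Q}$-linear equivalence, and, as you indicate, both hold on rational homogeneous spaces.
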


\begin{proof}
Since $X$ is rational homogeneous, the fibration $f$ is smooth.
Thus by \cite[Thm.2.2]{CFST16}
the local system $R^1 f_*\mathbb G_m \otimes \mathbb Q$
is a local system on $Z$ with finite monodromy.
Since $Z$ is simply connected, it is a trivial local system.
The statement follows.
\end{proof}

Let us recall that if $X$ is $\mathbb Q$-factorial and $X \rightarrow Y$ is a finite surjective morphism
between normal varieties, then $Y$ is $\mathbb Q$-factorial if this holds for $X$ \cite[Lemma 5.16]{KM98}.

\begin{lemma}\label{lem:sigmastar}
Let $X$ be a $\mathbb Q$-factorial normal projective variety.
 Let $\sigma\colon X\to X$ be a finite order automorphism and let $\tau\colon X\to Y:=X/\langle\sigma\rangle$ be the induced Galois cover.

 \begin{enumerate}
  \item If $\sigma_*\colon N_1(X)_{\mathbb R}\to N_1(X)_{\mathbb R}$ leaves  each extremal ray of $NE(X)$ invariant,
  then $\sigma_*$ is the identity.
   \item If $\sigma_*\colon N_1(X)_{\mathbb R}\to N_1(X)_{\mathbb R}$ is the identity map then $\tau_*\colon N_1(X)_{\mathbb R}\to N_1(Y)_{\mathbb R}$ induces an isomorphism $NE(X)\to NE(Y)$.
 \end{enumerate}
\end{lemma}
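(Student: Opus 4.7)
The argument splits according to the two assertions. For part (1), the plan is to show that $\sigma_*$ acts as the identity on every extremal ray and to use that these rays span $N_1(X)_{\mathbb R}$. Since $\sigma$ is an automorphism, $\sigma_*$ is a finite-order linear automorphism of $N_1(X)_{\mathbb R}$ that preserves the Mori cone $\overline{NE}(X)$. On an extremal ray $R$ with generator $v$, the hypothesis yields $\sigma_*(v) = \lambda v$; the condition $\sigma_*(v) \in R\setminus\{0\}$ forces $\lambda > 0$, and finite order then gives $\lambda = 1$. Since $\overline{NE}(X)$ is full-dimensional (curve classes already generate $N_1(X)_{\mathbb R}$) and strictly convex (Kleiman: an ample class is strictly positive on $\overline{NE}(X)\setminus\{0\}$), it equals the closed convex hull of its extremal rays, so these span $N_1(X)_{\mathbb R}$. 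Thus $\sigma_* = \id$.

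For part (2), I would first propagate the hypothesis to cohomology: the intersection identity $\sigma_* D \cdot C = D \cdot \sigma_*^{-1} C$ together with $\sigma_*^{-1} = \id$ on $N_1(X)_{\mathbb R}$ gives $\sigma^* = \id$ on $N^1(X)_{\mathbb R}$, so every element of $G = \langle\sigma\rangle$ acts trivially on $N^1(X)_{\mathbb R}$. Next, for the finite Galois cover $\tau$ the pullback lands in $N^1(X)^G_{\mathbb R}$, and $\tfrac{1}{|G|}\tau_*$ provides an inverse via the standard relations $\tau_*\tau^* = \deg(\tau)\cdot \id$ (projection formula) and $\tau^*\tau_* = \sum_{g\in G} g^*$ (Galois averaging on divisor classes). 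Triviality of the $G$-action on $N^1(X)_{\mathbb R}$ forces $N^1(X)^G_{\mathbb R} = N^1(X)_{\mathbb R}$, so $\tau^* \colon N^1(Y)_{\mathbb R} \xrightarrow{\sim} N^1(X)_{\mathbb R}$ is an isomorphism; dualising under the intersection pairing then gives a linear isomorphism $\tau_* \colon N_1(X)_{\mathbb R} \xrightarrow{\sim} N_1(Y)_{\mathbb R}$.

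Finally, I would verify that this linear isomorphism restricts to a bijection of Mori cones. Pushforward of effective cycles is effective, so $\tau_*(\overline{NE}(X)) \subseteq \overline{NE}(Y)$; conversely, every irreducible curve $C \subset Y$ satisfies $\tau_*(\tau^{-1}(C)) = \deg(\tau)\cdot[C]$, so the image contains every class of an effective curve on $Y$, and hence all of $\overline{NE}(Y)$ by closedness. Combined with linear injectivity this yields the desired isomorphism of cones.

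The main obstacle I anticipate is the correct set-up of the Galois-averaging identification $\tau^* \colon N^1(Y)_{\mathbb R} \xrightarrow{\sim} N^1(X)^G_{\mathbb R}$, which relies on the $\mathbb Q$-factoriality of $Y$ (provided by the cited descent from $X$) so that $\tau_*$ of a Cartier divisor can be treated as $\mathbb Q$-Cartier and the two projection-type formulas make sense numerically. Once this identification is in place the remainder of the argument is a formal manipulation of pushforward, pullback, and cone containments.
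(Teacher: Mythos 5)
Your proof is correct and follows essentially the same route as the paper: eigenvalue analysis on extremal rays for (1), and Galois averaging ($\tau^*\tau_*=\sum_i(\sigma^i)^*$, $\tau_*\tau^*=\deg\tau\cdot\id$) to show $\tau^*$ is an isomorphism on $N^1$ for (2). The only cosmetic difference is that the paper concludes the cone statement by dualising the induced isomorphism of nef cones, whereas you verify $\tau_*(\overline{NE}(X))=\overline{NE}(Y)$ directly via pushforward of effective curves; both are fine.
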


\begin{proof}
Fix a $k \in \N$ such that $\sigma^k = \id$.

 We start with (1).
 Let $R$ be an extremal ray, then by assumption $\sigma_*(R)=R$.
 Choose $0 \neq v\in R$, then $v$ is an eigenvector with eigenvalue $\lambda \in \R$. 
 Since $\sigma^k=id$, we have $\sigma_*v=\pm v $.
 Yet $\sigma_*R\subseteq NE(X)$, so we have $\sigma_*v=v $.
 The extremal rays of $NE(X)$ span $N_1(X)_{\mathbb R}$, therefore $\sigma_*$ is the identity.

 As for (2), we prove first that $\tau^*\colon N^1(Y)_{\mathbb R}\to N^1(X)_{\mathbb R}$
 is an isomorphism. The map $\tau^*$ is always injective. To show that it is surjective,
let $[D]\in N^1(X)_{\mathbb R}$. Since $\sigma_*$ and therefore $\sigma^*$ is the identity we have
$$
[kD]=\left(\sum_0^{k-1} (\sigma^i)^*[D]\right)=\tau^*\tau_*[D],
$$
proving that $D=\tau^*\tau_*[1/k D]$.

Moreover, $\tau^*$ induces an isomorphism between the nef cones of $X$ and $Y$.
Passing to the dual cones, we get the claim.
\end{proof}

\begin{remark} \label{remark-picard}
Let $\tau: X \rightarrow Y$ be a double cover between projective manifolds of dimension $n \geq 4$. Then $\tau$ induces an isomorphism
from the ramification divisor $R \subset X$ onto the branch locus $B \subset Y$. 
Assume that $R$ (or equivalently $B$) is an ample divisor. 
Since $n>3$ we know by the Lefschetz hyperplane theorem that
$$
\rho(X)=\rho(R)=\rho(B)=\rho(Y).
$$
\end{remark}

We will frequently use the following technical statement:

\begin{lemma} \label{lemma-embedding}
Let $X$ be a rational homogeneous space, and let 
$$
\holom{f_i}{X}{Z_i}
$$
be distinct contractions of extremal rays on $X$. Then the map 
$$
\holom{f:=f_1 \times f_2}{X}{Z_1 \times Z_2}
$$
is $\mbox{Aut}^{\circ}(X)$-equivariant and an embedding, i.e. an isomorphism onto its image.
\end{lemma}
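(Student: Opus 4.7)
The plan is to reduce the statement to the Lie-theoretic description of a rational homogeneous space as $X = G/P$ and then invoke the lattice of parabolics containing $P$.

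First I would settle equivariance. Set $G := \mathrm{Aut}^{\circ}(X)$. Since $G$ is connected, it acts trivially on $N_1(X)_{\mathbb{R}}$ and therefore preserves every extremal ray of $\overline{NE}(X)$. Because the contraction of an extremal ray is uniquely determined by that ray, each $f_i$ is $G$-equivariant, and hence so is $f = f_1 \times f_2$ for the diagonal $G$-action on $Z_1 \times Z_2$.

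Next I would identify the $f_i$ with parabolic quotients. Fixing a Borel $B$ and the simple roots $\Delta$, write $P = P_I$ for some $I \subset \Delta$, so that $X \simeq G/P_I$. Parabolics containing $P$ are of the form $P_J$ with $I \subseteq J \subseteq \Delta$, and elementary contractions of $X$ correspond precisely to the minimal such $J$ strictly containing $I$, namely $J = I \cup \{\alpha\}$ for $\alpha \in \Delta \setminus I$. Thus $f_i$ is the projection $G/P_I \to G/Q_i$ with $Q_i = P_{I \cup \{\alpha_i\}}$. Since $f_1 \neq f_2$ we have $\alpha_1 \neq \alpha_2$, and the standard fact that $P_J \cap P_K = P_{J \cap K}$ for parabolics containing $B$ yields
$$
Q_1 \cap Q_2 = P_{(I \cup \{\alpha_1\}) \cap (I \cup \{\alpha_2\})} = P_I = P.
$$

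Finally I would conclude via orbit-stabilizer. The map $f: X \to Z_1 \times Z_2$ is the $G$-orbit map of the point $x_0 := (eQ_1, eQ_2)$, and the computation above shows that the stabilizer of $x_0$ equals $Q_1 \cap Q_2 = P$. Therefore $f$ descends to a $G$-equivariant bijection from $X = G/P$ onto the $G$-orbit $G \cdot x_0 \subset Z_1 \times Z_2$. This orbit is smooth, and since $X$ is projective and $f$ is proper it is closed; so $f$ is a $G$-equivariant isomorphism of $X$ onto its image, i.e.\ a closed embedding.

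The only real obstacle is the intersection formula $Q_1 \cap Q_2 = P$; everything else is formal. One has to invoke the Borel-Tits correspondence between parabolics containing $B$ and subsets of $\Delta$, together with the fact that extremal rays of $\overline{NE}(G/P_I)$ correspond bijectively to simple roots in $\Delta \setminus I$. Both are standard but form the crux of the argument, and are what guarantee that distinct extremal contractions of $X$ are "independent" in the sense required.
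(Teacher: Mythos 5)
Your proof is correct, but it takes a genuinely different route from the paper's. You reduce everything to the Borel--Tits combinatorics of parabolics: $X=G/P_I$, elementary contractions are the projections to $G/P_{I\cup\{\alpha\}}$, and the identity $Q_1\cap Q_2=P_{(I\cup\{\alpha_1\})\cap(I\cup\{\alpha_2\})}=P_I$ lets you conclude by orbit--stabilizer that $f$ is the orbit map of a point with stabilizer exactly $P$, hence an isomorphism onto the (closed, since $X$ is complete) orbit. The paper instead argues geometrically and never identifies the contractions root-theoretically: equivariance comes from Blanchard's lemma; a curve in a fibre of $f$ would have class in $R_1\cap R_2=\{0\}$, so the fibres of $f$ are finite and $f$ is finite onto its image; homogeneity plus equivariance then forces $f(X)$ to be smooth and $f$ \'etale onto it; and since $f(X)$ is smooth and rationally connected it is simply connected, so the \'etale map has degree one. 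Your approach buys more --- it identifies the image as $G/(Q_1\cap Q_2)$ and avoids the \'etale/simply-connected step --- at the cost of invoking the classification of parabolics containing a Borel and the correspondence between extremal rays and simple roots in $\Delta\setminus I$; the paper's softer argument needs none of that structure theory and would apply verbatim to any two surjective morphisms with set-theoretically finite common fibres. The only point in your write-up worth making explicit is that the bijective orbit map $G/G_{x_0}\to G\cdot x_0$ is an isomorphism of varieties because we are in characteristic zero (separability of orbit maps); with that remark added, your argument is complete.
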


\begin{proof}
The fibrations $f_i$ are $\mbox{Aut}^{\circ}(X)$-equivariant by Blanchard's lemma, so it is clear
that $f$ is also equivariant. 
 For every point $(z_1, z_2) \in f(X)$, we have
$\fibre{f}{(z_1, z_2)}= \fibre{f_1}{z_1} \cap \fibre{f_2}{z_2}$. Since the extremal rays are distinct, their fibres intersect in at most finitely many points.
This implies that $f\colon X \to f(X)$ is finite.
Since $\mbox{Aut}^{\circ}(X)$ acts transitively on $X$ the equivariance of $f$ implies that
$f(X)$ is smooth and $f: X \rightarrow f(X)$ is \'etale. Since $f(X)$ is rationally connected and smooth, it is simply connected. Thus the \'etale map
$f: X \rightarrow f(X)$ is an isomorphism.
\end{proof}

\begin{lemma}\label{lem:Pic2}
Let $X$ be a rational homogeneous space, and
let $M \subset X$ be a smooth ample divisor.
Assume that there is a finite morphism of degree two $\tau \colon X\to Y$ ramified along $M$ such that $\sigma_*$ is not the identity on $NE(X)$, where $\sigma\in Aut(X)$ is the automorphism of the cover $\tau$.
Then $X\cong \PP^1\times \PP^1$.
\end{lemma}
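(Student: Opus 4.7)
The plan is to exploit the failure of $\sigma_*$ to be the identity so as to embed $X$ into a self-product $Z \times Z$ on which $\sigma$ acts by the coordinate swap; then $M$ is forced into the diagonal, a dimension count turns $X$ into a $\PP^1$-fibration of which $M$ is a section, and Borel's lemma on sections of $\PP^1$-bundles over homogeneous spaces then forces $\dim X = 2$.

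Concretely, I would first apply Lemma \ref{lem:sigmastar}(1): since $\sigma_*$ is not the identity on $NE(X)$, there exists an extremal ray $R_1$ with $R_2 := \sigma_*(R_1) \neq R_1$, and in particular $\rho(X) \geq 2$. Denote the corresponding elementary contractions by $f_i\colon X \to Z_i$. Because $\sigma \in \aut(X)$ exchanges the two rays, $f_2 \circ \sigma$ is constant on $R_1$-curves; the universal property of $f_1$ then produces an isomorphism $g\colon Z_1 \to Z_2$ with $f_2 \circ \sigma = g \circ f_1$. Identifying $Z_1 = Z_2 =: Z$ via $g$, this reads $f_2 = f_1 \circ \sigma$. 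The embedding lemma \ref{lemma-embedding} then realises $\phi := f_1 \times f_2$ as an isomorphism from $X$ onto its image in $Z \times Z$, and a direct computation shows that under $\phi$ the involution $\sigma$ becomes the restriction of the coordinate swap on $Z \times Z$.

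From here the geometry becomes rigid. The fixed locus of the swap is the diagonal $\Delta_Z$, so $\phi(M) = \phi(X) \cap \Delta_Z$, and in particular $M$ embeds into $\Delta_Z \cong Z$. Comparing dimensions gives $n-1 = \dim M \leq \dim Z$, while $\dim Z \leq n-1$ since $f_1$ is a nontrivial fibration; hence $\dim Z = n-1$, the fibres of $f_1$ are one-dimensional, and being rational homogeneous by Example \ref{example-fibres}, they are copies of $\PP^1$. Moreover, for every $z \in Z$ the set $M \cap f_1^{-1}(z)$ equals $\phi^{-1}(\{(z,z)\})$, a single point, so $f_1|_M$ is a finite bijective morphism between smooth varieties, hence an isomorphism; this exhibits $M$ as a section of the $\PP^1$-fibration $f_1$. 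Invoking \cite[Appendix, Lemma (1)]{Bor91} exactly as in Case~1 of the proof of Proposition \ref{proposition-contractions} forces $\dim X \leq 2$, and since a $2$-dimensional rational homogeneous space of Picard rank at least two is necessarily $\PP^1 \times \PP^1$, the lemma follows.

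The step I expect to be the main obstacle is the $\sigma$-equivariant matching of targets: one must verify carefully that the isomorphism $g\colon Z_1 \to Z_2$ produced by the universal property really does convert $\sigma$ into the swap on $\phi(X)$, so that the fixed locus of $\sigma$ on $\phi(X)$ is precisely the diagonal slice. Once this equivariance is pinned down, the remaining steps---the dimension estimate on $Z$, the section property of $M$, and the invocation of Borel's lemma---are essentially routine.
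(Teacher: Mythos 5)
Your argument is correct and follows the paper's proof in all essentials: the swapped pair of extremal rays, the equivariant embedding into $Z_1\times Z_2$ via Lemma \ref{lemma-embedding}, the identification of $\sigma$ with the coordinate swap, and the resulting confinement of $M$ to the diagonal. The only divergence is the endgame: the paper deduces $\rho(M)=\rho(X)-1$ from $M\simeq Z_1$ and contradicts the Lefschetz hyperplane theorem for $\dim X\geq 3$, whereas you exhibit $M$ as an ample section of the $\PP^1$-bundle $f_1$ (note that non-emptiness of $M\cap f_1^{-1}(z)$ uses the ampleness of $M$) and invoke Koll\'ar's lemma \cite[Appendix, Lemma (1)]{Bor91} as in the first case of Proposition \ref{proposition-contractions}; both conclusions are valid and equally short.
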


\begin{proof}
The Mori cone of a Fano manifold is generated by its extremal rays
and $\sigma_*$ acts by permutation on them. If the permutation is trivial, then $\sigma_*$ is the identity by Lemma \ref{lem:sigmastar}(1).

Thus we can assume that there exists an extremal ray $\Gamma_1$ such that
$\Gamma_2 := \sigma_* \Gamma_1$ is distinct from $\Gamma_1$.
Let $f_i\colon X\to Z_i$ for $i=1,2$ be the two $Aut^{\circ}(X)$-equivariant fibrations induced by the two extremal rays of $NE(X)$.
By Lemma \ref{lemma-embedding} the map
$$
\holom{f:=f_1 \times f_2}{X}{Z_1 \times Z_2}
$$
is an isomorphism onto its image.

By the rigidity lemma there is an isomorphism $\sigma_Z$ making the following diagram commute
$$
\xymatrix{
X\ar[r]^{\sigma}\ar[d]_{f_1}&X\ar[d]^{f_2}\\
Z_1\ar[r]_{\sigma_Z}&Z_2
}
$$
Consider now the involution
$$
\begin{array}{rcl}
i: Z_1\times Z_2&\to &Z_1\times Z_2\\
(z_1,z_2)&\mapsto&(\sigma_Z^{-1}z_2,\sigma_Z z_1)
\end{array}
$$
Let $f$ be the embedding of Lemma \ref{lemma-embedding}.
The image $f(M) \simeq M \subset Z_1 \times Z_2$ is contained in the fixed locus of $i$.

Moreover $i$ is conjugated by the isomorphism $(id,\sigma_Z^{-1})\colon Z_1\times Z_2\to Z_1\times Z_1$ to the involution
$$
\begin{array}{rrcl}
\iota: & Z_1\times Z_1&\to &Z_1\times Z_1\\
& (z_1,z_2)&\mapsto&(z_2,z_1)
\end{array}
$$
The fixed locus of $\iota$ is the diagonal $\Delta \simeq Z_1$ so we obtain that $\dim M \leq \dim Z_1 \leq \dim X-1$. Since $M$ is a divisor in $X$ we have equality
and $M \simeq Z_1$. In particular we have $\rho(M)=\rho(Z_1)=\rho(X)-1$.

By the Lefschetz hyperplane theorem the restriction map $N^1(X) \rightarrow N^1(M)$
is injective if $\dim X \geq 3$. Thus we obtain $\dim X \leq 2$ and therefore $X\cong \PP^1\times\PP^1$ (note that the case of a projective space is excluded by the condition that $\sigma_*$ is not the identity).
\end{proof}

\begin{lemma}\label{lem:Pic3}
Let $X$ be a rational homogeneous space of dimension $n \geq 4$, and
let $M \subset X$ be a smooth ample divisor.
Assume that there is a finite morphism of degree two $\tau \colon X\to Y$ ramified along $M$ such that $\sigma_*$ is the identity on $NE(X)$, where $\sigma\in Aut(X)$ is the automorphism of the cover $\tau$.

Then $-K_X-M$ is ample.
\end{lemma}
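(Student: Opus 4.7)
Plan: By Kleiman's criterion, it suffices to show $(K_X + M) \cdot \Gamma < 0$ for every extremal ray $\Gamma$ of $\NE{X}$. Fix such a ray and let $\holom{f_\Gamma}{X}{Z_\Gamma}$ be its contraction. Since $X$ is rational homogeneous, $f_\Gamma$ is a smooth fibration with $Z_\Gamma$ rational homogeneous. By hypothesis $\sigma_*\Gamma = \Gamma$, so the rigidity lemma (as in the proof of Lemma \ref{lem:Pic2}) descends $\sigma$ to an involution $\sigma_\Gamma$ on $Z_\Gamma$ with $f_\Gamma \circ \sigma = \sigma_\Gamma \circ f_\Gamma$. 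Because $M$ is ample we have $f_\Gamma(M) = Z_\Gamma$, and $\sigma_\Gamma$ must fix $f_\Gamma(M)$ pointwise, so $\sigma_\Gamma = \id$. Consequently $\sigma$ preserves every fibre $F$ of $f_\Gamma$, acting on $F$ as an involution with fixed locus $F \cap M$.

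The key step, and what I expect to be the main obstacle, is to exclude the case $\dim F = 1$. Here $F \simeq \PP^1$ and a non-trivial involution of $\PP^1$ has exactly two distinct fixed points. Let $B \subset Z_\Gamma$ be the closed locus of $z$ with $F_z \subseteq M$ (equivalently $\sigma|_{F_z} = \id$). Its preimage in $M$ has dimension $\dim B + 1$; if this equalled $\dim M = \dim Z_\Gamma$, the preimage would fill out the irreducible $M$, contradicting $f_\Gamma(M) = Z_\Gamma$. Hence $B$ has codimension at least two in $Z_\Gamma$, so $U := Z_\Gamma \setminus B$ is simply connected since $Z_\Gamma$ is. Over $U$ the restriction $f_\Gamma|_M$ is flat, finite, with two reduced points in every fibre, hence an \'etale double cover; its source $M \cap f_\Gamma^{-1}(U)$ is an open dense subset of the irreducible $M$, so connected. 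A connected \'etale double cover of a simply connected base does not exist, yielding the contradiction.

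Thus $\dim F \geq 2$. The general fibre $F$ is rational homogeneous of Picard number one by Lemma \ref{lem:monodromy}, and $\sigma|_F$ is non-trivial (otherwise every fibre would lie in $M$ and $X = M$). When $\dim F \geq 3$, the smooth ample divisor $F \cap M$ is normal split in $F$ by Example \ref{example-degree-two}, so Corollary \ref{cor:Pic1} gives $F \simeq \PP^k$ or $Q^k$ with $F \cap M$ a hyperplane section. When $\dim F = 2$, then $F \simeq \PP^2$ (the only rational homogeneous surface of Picard number one), and the classification of involutions of $\PP^2$ forces the divisorial part of the fixed locus of $\sigma|_F$ to be a line, so $M|_F = \sO_F(1)$. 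In all cases, taking a minimal rational curve $\ell \subset F$ generating $\Gamma$, one has $M \cdot \ell = 1$ and $K_X \cdot \ell = K_F \cdot \ell \in \{-(k+1), -k, -3\}$. Therefore $(K_X + M) \cdot \ell < 0$ in every case, and Kleiman's criterion yields the ampleness of $-K_X - M$.
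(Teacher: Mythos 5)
Your argument is essentially correct but follows a genuinely different route from the paper's. The paper argues by contradiction with the nef threshold $\lambda$ of $-K_X-\lambda M$, descends the corresponding contraction to the quotient $Y=X/\sigma$ (using $\rho(X)=\rho(Y)$ from the Lefschetz theorem, which is where $n\geq 4$ enters), proves that the induced map $\bar\tau\colon T\to U$ between the bases is an isomorphism (via purity of the branch locus and Graber--Harris--Starr), identifies each fibre pair as $(Q^d,\PP^d)$ by Hwang--Mok, and disposes of the one-dimensional-fibre case with Koll\'ar's lemma. You instead verify $(K_X+M)\cdot\Gamma<0$ ray by ray, never pass to $Y$, exclude one-dimensional fibres by observing that $f_\Gamma|_M$ would be a connected \'etale double cover of a simply connected base, and handle higher-dimensional fibres by applying Corollary \ref{cor:Pic1} to the fibre equipped with its induced involution. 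This is more direct, avoids the delicate isomorphism claim for $\bar\tau$ and the appeal to Koll\'ar's lemma, and the deep input (Hwang--Mok, packaged in Corollaries \ref{cor:Q2proj} and \ref{cor:Pic1}) is the same in both proofs.

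Two points need to be shored up. First, you repeatedly use that the scheme-theoretic intersection $M\cap F$ is reduced (``two reduced points in every fibre''; ``$M\cdot\ell=1$'' from $F\cap M$ being a hyperplane section): this is exactly what makes the double cover \'etale and what bounds $M\cdot\ell$, and a priori $M|_F$ could be a non-reduced multiple of the fixed divisor if $F$ were tangent to $M$ along it, in which case neither conclusion follows. The fix is the linearization of $\sigma$ at a fixed point $p\in F\cap M$: the $(+1)$-eigenspace of $d\sigma_p$ is $T_{M,p}$, and $T_{F,p}$ is $d\sigma_p$-invariant; if $T_{F,p}\subseteq T_{M,p}$ in the fibre direction then $\sigma|_F$ would have identity differential at $p$ and hence be trivial on the connected $F$, a contradiction -- so $F$ meets $M$ transversally along $\operatorname{Fix}(\sigma|_F)$ and the intersection scheme is reduced. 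You should include this sentence. Second, in the case $\dim F=2$ you only consider the divisorial part of $\operatorname{Fix}(\sigma|_F)$; a non-trivial involution of $\PP^2$ also has an isolated fixed point, which would have to lie on the purely one-dimensional set $F\cap M$, hence on the fixed line -- impossible, so this case is in fact vacuous. As written, your deduction ``divisorial part is a line, so $M|_F=\sO_F(1)$'' identifies only the support of $M\cap F$, although the final inequality would survive either reading.
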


\begin{proof}
Since $n \geq 4$ we know by Remark \ref{remark-picard} that $\rho(X)=\rho(Y)$.

Since $M$ is rational homogeneous it is clear that $-K_M=(-K_X-M)|_M$ is ample, but it is not so clear for $-K_X-M$ itself. Arguing by contradiction we set $0 < \lambda \leq 1$
such that $-K_X - \lambda M$ is a nef, but non ample divisor class.
Note that in general $\lambda \in \R$, but since the Mori cone of the Fano manifold
$X$ is rational polyhedral we have $\lambda \in \Q$.
Moreover by the cone theorem there exists an extremal ray $\Gamma$ such that
$(-K_X-\lambda M) \cdot \Gamma=0$, and we denote by
$$
f: X \rightarrow T
$$
its contraction. Since $X$ is rational homogeneous, the fibration is of fibre type and a locally trivial fibration with general fibre $F$ with $\rho(F)=1$ (by Lemma \ref{lem:monodromy}).

Since $\sigma_*$ acts as the identity on $N_1(X)$, the image of the extremal ray
$\tau_* \Gamma$ is an extremal ray in $NE(Y)$ by Lemma \ref{lem:sigmastar}(2), and we denote its
contraction by
$$
g: Y \rightarrow U.
$$
By the rigidity lemma we have an induced morphism $\holom{\bar \tau}{T}{U}$
such that 
$$
\xymatrix{
X \ar[r]^{\tau}\ar[d]_{f}& Y \ar[d]^{g} \\
T \ar[r]_{\bar \tau} & U
}
$$
commutes.
Since $\rho(X)=\rho(Y)$ we have
$$
\rho(T)=\rho(X)-1=\rho(Y)-1=\rho(U).
$$
Thus the morphism $\bar \tau$ is finite, and we claim that is actually an isomorphism.
We argue by contradiction.
Since $\tau$ is two-to-one, it is clear that $\bar \tau$ is then also two-to-one.
In what follows we denote by $G=\fibre{g}{u}$ a general $g$-fibre.

{\em 1st case. Assume that the fibres of $f$ and $g$ have dimension one.}
In this case $G \simeq \PP^1$. Since $\fibre{\bar \tau}{u}$
consists of two points, the preimage $\fibre{\tau}{G}$ has two connected components $F_1 \cup F_2$ mapping isomorphically onto $G$. Yet $-K_X \cdot G=2=-K_Y \cdot F_i$ since they are general fibres of the fibrations. Moreover by the ramification formula
$-K_X = \tau^*(-K_Y) - M$. Since $\tau|_{F_i}$ is an isomorphism we obtain
$M \cdot F_i=0$, a contradiction to the ampleness of $M$.

{\em 2nd case. Assume the fibres of $f$ and $g$ have dimension at least two.}
In this case the restriction of $g$ to $M \subset Y$ has connected fibres, since
the intersection $M \cap G \subset G$ is an ample divisor.
Since $M$ is rational homogeneous (something that we don't know about $Y$), we obtain
that $g|_M: M \rightarrow U$ is smooth and $U$ is smooth and rational homogeneous by the Blanchard's lemma.

The morphism $\bar\tau\colon T\to U$ is a finite morphism of degree two between rational homogeneous, and in particular Fano, varieties. Therefore, it cannot be \'etale.

Since $\bar \tau$ is of degree two and not \'etale there exists by purity of branch a prime divisor
$B \subset U$ such that $\bar \tau^* B$ is not reduced.
Thus $f^* \bar \tau^* B$ has a non-reduced irreducible component.
Since $g$ is an elementary Mori contraction, the preimage $g^{-1} B$ is irreducible.
As a consequence of the Graber-Harris-Starr theorem, the irreducible divisor $g^* B$
is reduced. Yet $\tau^* g^* B = f^* \bar \tau^* B$ is not reduced, so
$g^* B$ is an irreducible component of the branch locus of $\tau$. Yet $\tau$ ramifies
exactly along the ample divisor $M$, a contradiction.

This finishes the proof of the claim that $\bar \tau$ is an isomorphism.

Thus for any point $u \in U$, we have an induced degree two cover
$$
\tau_F: F := \fibre{f}{\bar \tau^{-1}(u)} \rightarrow G := \fibre{g}{u}
$$
that ramifies along $F \cap M$.
Choose $u \in U$ general so that $G$ is smooth.
The fibre $F$ is rational homogeneous by Example \ref{example-fibres}.
Since $\rho(F)=1$ we can apply Corollary \ref{cor:Q2proj}
to obtain that $F$ is a quadric $Q^d$ and $G \simeq \PP^d$. Moreover the divisor
$(F \cap M) \subset F$ is a hyperplane section, i.e. an element of  $|\sO_Q(1)|$.

If $\dim F \geq 2$ this gives an immediate contradiction: by assumption
$K_F+\lambda M_F$ is trivial. Yet for a quadric $-K_Q \simeq \sO_Q(\dim Q)$, so
we obtain $\lambda = \dim Q>1$, a contradiction.

Thus we have $\dim F=1$, in this case the ramification formula yields
$$
2 = -K_X \cdot F = \tau^* (-K_Y) \cdot F - M \cdot F = -K_Y \cdot 2G - M \cdot F = 4 - M \cdot F,
$$
and therefore $M \cdot F=2$. Since $X \rightarrow T$ is a $\PP^1$-bundle and
and $M \rightarrow T$ is finite (since $M$ is rational homogeneous), 
a result of Koll\'ar \cite[Appendix, Lemma (2)]{Bor91} implies $\dim T \leq 2$, the final contradiction.
\end{proof}

\begin{proposition}
Let $X$ be a rational homogeneous space of dimension $n \geq 4$, and
let $M \subset X$ be a smooth ample divisor.
Assume that there is a finite morphism of degree two $\tau \colon X\to Y$ ramified along $M$ such that $\sigma_*$ is the identity on $NE(X)$, where $\sigma\in Aut(X)$ is the automorphism of the cover $\tau$.
Then $\rho(X)=1$.
\end{proposition}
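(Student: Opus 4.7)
The plan is to argue by contradiction, assuming $\rho(X) \geq 2$. By Lemma \ref{lem:Pic3} the class $-K_X - M$ is ample, so every extremal ray $\Gamma$ of $NE(X)$ satisfies $(K_X + M) \cdot \Gamma < 0$. Proposition \ref{proposition-contractions} then forces every elementary Mori contraction $f\colon X \to Z$ to satisfy $\dim Z \leq n/2$.

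Pick two distinct extremal rays $\Gamma_1, \Gamma_2$ with contractions $f_i\colon X \to Z_i$. By Lemma \ref{lemma-embedding} the product map $f := f_1 \times f_2\colon X \to Z_1 \times Z_2$ is an isomorphism onto its image, so
$$
n = \dim f(X) \leq \dim Z_1 + \dim Z_2 \leq \frac{n}{2} + \frac{n}{2} = n.
$$
Equality forces $\dim Z_1 = \dim Z_2 = n/2$ and $f(X) = Z_1 \times Z_2$, yielding an isomorphism $X \simeq Z_1 \times Z_2$; in particular $n$ is even and $\dim Z_i \geq 2$.

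Since $\sigma_*$ is the identity on $NE(X)$, the involution $\sigma$ preserves each $\Gamma_i$ and hence each contraction $f_i$. The rigidity lemma yields involutions $\sigma_{Z_i}\colon Z_i \to Z_i$ with $f_i \circ \sigma = \sigma_{Z_i} \circ f_i$; under the identification $X \simeq Z_1 \times Z_2$ this reads $\sigma = (\sigma_{Z_1}, \sigma_{Z_2})$. Hence
$$
M = \mathrm{Fix}(\sigma_{Z_1}) \times \mathrm{Fix}(\sigma_{Z_2}),
$$
and irreducibility of $M$ forces both factors to be irreducible. Setting $a_i := \dim \mathrm{Fix}(\sigma_{Z_i})$, we have $a_1 + a_2 = n - 1$ and $a_i \leq \dim Z_i = n/2$, hence $a_i \geq n/2 - 1$. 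Thus $\{a_1, a_2\} = \{n/2, n/2 - 1\}$, and up to swapping factors $\sigma_{Z_1} = \id_{Z_1}$ while $F := \mathrm{Fix}(\sigma_{Z_2})$ is a smooth prime divisor of $Z_2$. Then $M = Z_1 \times F$ is the pull-back of $F$ under the second projection, so $M$ has zero intersection with any curve contained in a fibre $Z_1 \times \{z_2\}$ with $z_2 \notin F$, contradicting the ampleness of $M$.

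The main obstacle is the third step, i.e.\ extracting the product decomposition $\sigma = (\sigma_{Z_1}, \sigma_{Z_2})$ from the rigidity lemma; once this is secured, the dimension count and the ampleness contradiction follow essentially automatically, using only $\dim Z_i \geq 2$.
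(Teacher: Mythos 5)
Your proof is correct and follows essentially the same route as the paper: Lemma \ref{lem:Pic3} and Proposition \ref{proposition-contractions} to show every elementary contraction has target of dimension at most $n/2$, the product decomposition $X\simeq Z_1\times Z_2$ via Lemma \ref{lemma-embedding}, the induced involutions on the factors, and the contradiction with $\mathrm{Fix}(\sigma)=M$ being an ample divisor. The only differences are cosmetic: the paper handles the fibre-dimension-one contractions by a separate direct appeal to Koll\'ar's lemma, and is terser in the final step, where you usefully spell out why $M=Z_1\times\mathrm{Fix}(\sigma_{Z_2})$ contradicts ampleness.
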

\begin{proof}
Assume by contradiction that $\rho(X)\geq 2$.
Let $\Gamma$ be any extremal ray
in $\NE{X}$, and let $f: X \rightarrow T$ be its contraction. 

{\em 1st case. Assume first that $\dim X-\dim T=1$.}
By Lemma \ref{lem:Pic3} we have $(-K_X-M) \cdot F>0$, which implies that
$$
2 = -K_X \cdot F > M \cdot F \geq 1.
$$
Thus $M \cdot F=1$, and $M$ being rational homogeneous by example \ref{example-degree-two} and by \cite{FH24}, it is a section of $f$.
Yet $M$ is ample, so Koll\'ar's result \cite[Appendix, Lemma (1)]{Bor91} gives $\dim T \leq 1$, a contradiction.

\medskip
{\em 2nd case. Assume now that $\dim X-\dim T>1$.}
Applying Proposition \ref{proposition-contractions}  we see that we even have
$\dim X \geq 2 \dim T$. Since $\rho(X) \geq 2$ there are at least two elementary contractions $f_1$ and $f_2$ and their general fibres $F_1$ and $F_2$ intersect in at most finitely many points. By the first case and the estimate above implies that
$$
\dim F_1 = \dim F_2 = \frac{1}{2} \dim X.
$$

Thus we have $f_i\colon X\to T_i$ such that two general fibres intersect in finitely many points. Then the variety $T_1\times T_2$ is rational homogeneous and $(f_1,f_2)\colon X\to T_1\times T_2$ is equivariant and generically finite. It is surjective by the hypothesis on the dimensions. Therefore $(f_1,f_2)$ is an isomorphism.

\medskip

Since $\sigma_*$ is the identity, for $i=1,2$ there are involutions $\sigma_i$ for $i=1,2$ and a commutative diagram
$$
\xymatrix{
T_1\times T_2\ar[r]^{\sigma}\ar[d]^{p_i}&T_1\times T_2\ar[d]_{p_i}\\
T_i\ar[r]_{\sigma_i}& T_i
}
$$
But this implies that the fixed locus of $\sigma$ is contained in $p_1^{-1}Fix(\sigma_1)\cap p_2^{-1}Fix(\sigma_2) $,
a contradiction because the fixed locus of $\sigma$ is a non-empty divisor.
\end{proof}

We are left with the three-dimensional case.

\begin{proposition}
Let $X$ be a rational homogeneous space of dimension three, and
let $M \subset X$ be a smooth ample divisor that is normal split.
Then one has $\rho(X)=1$.
\end{proposition}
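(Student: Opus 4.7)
I would argue by contradiction, assuming $\rho(X)\ge 2$.

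\emph{Step 1: Identify $X$.} Proposition \ref{proposition-contractions} produces an elementary Mori contraction $f\colon X\to T$ with $(K_X+M)\cdot\Gamma<0$ and $\dim X\ge 2\dim T$. Since $\dim X=3$ this forces $\dim T\le 1$, and since $\rho(X)\ge 2$ we have $\dim T\ge 1$; hence $T\simeq\PP^1$. Homogeneity ensures $f$ is a locally trivial bundle with rational homogeneous fibre $F$, and Lemma \ref{lem:monodromy} gives $\rho(F)=\rho(X)-1\in\{1,2\}$. So $F\simeq\PP^2$ or $F\simeq\PP^1\times\PP^1$, and the classification of three-dimensional rational homogeneous spaces then forces $X\simeq\PP^1\times\PP^2$ or $X\simeq(\PP^1)^3$. (In particular the flag variety $F(1,2;3)$ is excluded automatically, since its two elementary Mori contractions both have target of dimension $2$, violating the inequality from Proposition \ref{proposition-contractions}.)

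\emph{Step 2: Produce the involution.} Since $X\not\simeq\PP^3$, Proposition \ref{pro:BeaMer} supplies an involution $\sigma\in\Aut(X)$ whose fixed locus equals $M$; the degree-two quotient $\tau\colon X\to X/\langle\sigma\rangle$ is ramified along $M$. If $\sigma_*$ acted non-trivially on $N_1(X)$, Lemma \ref{lem:Pic2} would force $X\simeq\PP^1\times\PP^1$, contradicting $\dim X=3$. Hence $\sigma_*=\mathrm{id}$, so $\sigma$ preserves every extremal ray.

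\emph{Step 3: Contradict the structure of $\mathrm{Fix}(\sigma)$.} Using the explicit descriptions $\Aut(\PP^1\times\PP^2)=\mathrm{PGL}_2\times\mathrm{PGL}_3$ (no factor swap since $\PP^1\not\simeq\PP^2$) and $\Aut((\PP^1)^3)=\mathrm{PGL}_2^3\rtimes S_3$ with the $S_3$ acting faithfully on $N_1(X)$ by permutation of the extremal rays, the constraint $\sigma_*=\mathrm{id}$ forces $\sigma=(\sigma_1,\ldots,\sigma_k)$ to be a product of involutions on the factors. Consequently $\mathrm{Fix}(\sigma)=\prod_i\mathrm{Fix}(\sigma_i)$. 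Since a non-trivial involution of $\PP^1$ fixes two points and a non-trivial involution of $\PP^2$ fixes $\PP^1\sqcup\{\mathrm{pt}\}$, a direct enumeration of factor-wise choices shows that every such product of total dimension $2$ is either reducible or has $\sigma=\mathrm{id}$. This contradicts $M=\mathrm{Fix}(\sigma)$ being a smooth irreducible ample divisor.

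The step I expect to be most delicate is the product decomposition of $\sigma$ in Step 3. If one prefers not to quote the explicit form of $\Aut(X)$, the fallback is to repeat the rigidity-lemma argument from the proof of Lemma \ref{lem:Pic2}: for each elementary Mori contraction $f_i\colon X\to Z_i$, descend $\sigma$ to an involution $\bar\sigma_i$ of $Z_i$ via rigidity, and then exploit the $\Autz(X)$-equivariant embedding of Lemma \ref{lemma-embedding} (combined with a dimension count that makes the embedding an isomorphism in the cases at hand) to check that $\sigma$ is intertwined with $\prod_i\bar\sigma_i$, whereupon the factor-wise analysis of the fixed locus goes through as above.
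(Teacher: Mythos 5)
Your proposal is correct, and for the main case it takes a genuinely different route from the paper. The paper's proof splits according to the dimensions of the targets of two extremal contractions, identifies $X$ as $\PP^1\times\PP^2$ or $\PP(T_{\PP^2})$, pins down the bidegree of $M$, and then derives a contradiction from the Mori--Mukai bound $(-K_Y)^3\le 64$ together with their classification tables. You instead exploit the ``moreover'' clause of Proposition \ref{proposition-contractions} at full strength: it forces an elementary contraction onto a target of dimension at most one, hence onto $\PP^1$, which already gives $\rho(X)=2$ and $X\cong\PP^1\times\PP^2$ and disposes of $\PP(T_{\PP^2})$ (whose two contractions both have two-dimensional targets) without any degree computation. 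Then, rather than computing $(-K_Y)^3$, you use $\sigma_*=\mathrm{id}$ to split the involution as a product $\sigma_1\times\sigma_2$ acting factorwise and observe that its fixed locus is necessarily reducible or of the wrong dimension --- this is exactly the fixed-locus argument the paper itself uses in the last paragraph of its $n\ge 4$ proposition, transplanted to dimension three. What your route buys is independence from the Mori--Mukai classification (which the authors explicitly flag as avoidable); what the paper's route buys is a uniform ``streamlined'' computation that does not require analysing $\Aut(X)$.

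Two small points of precision. First, once $f\colon X\to T\simeq\PP^1$ is an \emph{elementary} contraction you automatically have $\rho(X)=\rho(T)+1=2$, hence $\rho(F)=1$ and $F\simeq\PP^2$; the alternative $\rho(F)=2$, and with it the case $X\simeq(\PP^1)^3$, never occurs (indeed $(\PP^1)^3$ has no elementary contraction onto a curve). Carrying the extra case and killing it in Step 3 is harmless but unnecessary. Second, in Step 3 you should say explicitly that $\mathrm{Fix}(\sigma)=M$ forces $\sigma\ne\mathrm{id}$, so at least one factor $\sigma_i$ is a nontrivial involution; the enumeration (two points on $\PP^1$, a line plus a point on $\PP^2$) then yields a fixed locus that is either disconnected, non-pure-dimensional, or a non-ample divisor, contradicting the fact that $M$ is a smooth irreducible ample divisor.
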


The following proof could be carried out without the classification
of Mori and Mukai \cite{MM81}, but a degree computation in the spirit 
of \cite[Prop.3]{BeaMer87} gives a more streamlined argument. 

\begin{proof}
Assume by contradiction that $\rho(X)\geq 2$ and let $f_i\colon X\to T_i$ for $i=1,2$ be two extremal contractions.
Since the fibres of $f_1$ and $f_2$ intersect in finitely many points there are, up to renumbering 
$f_1$ and $f_2$,  two possibilities : either
$\dim T_1=1$ and $\dim T_2=2$, or $\dim T_1=\dim T_2=2$.

{\em 1st case: assume that $\dim T_1=1$ and $\dim T_2=2$.}
Since $X$ has an elementary Mori contraction onto a curve we have $\rho(X)=2$.
Thus Lemma \ref{lemma-embedding} implies that $X \simeq \PP^1 \times \PP^2$.
Since the fibres of $f_1$ and $f_2$ have different dimensions,
the involution $\sigma_*$ acts as the identity and by Lemma \ref{lem:sigmastar}
we have $\rho(Y)=\rho(X)=2$. 

The ample divisor $M$ is an element of  $|\sO_{\PP^1 \times \PP^2}(a,b)|$ for some $a \geq 1, b \geq 1$.
Since $M$ is rational homogeneous the induced fibration
$$
f_1: M \rightarrow \PP^1
$$
must be a $\PP^1$-bundle structure. In particular by the adjunction formula we have $b \leq 2$. Since $K_M^2=8$ for every Hirzebruch surface a short intersection computation allows to exclude the case $b=2$. Thus we have $b=1$ and it is not difficult to see that
a general element in $\sO_X(a,1)$ with $a \in 2 \N$ is indeed isomorphic to $\PP^1 \times \PP^1$.  We claim that such a $M \subset X$ is never normal split: arguing by contradiction, we apply Proposition \ref{pro:BeaMer} to obtain an 
involution $\sigma: X\rightarrow X$ with fixed locus $M$, and therefore a degree two cover
$$
\holom{\tau}{\PP^1 \times \PP^2}{Y}
$$
that ramifies along $M$. By the ramification formula
$$
\tau^* (-K_Y) \simeq -K_X + M
$$
is ample, so $Y$ is Fano. Moreover we compute
$$
(-K_Y)^3 = \frac{1}{2} (-K_X+M)^3 = 8 (2+a).
$$
Since $(-K_Y)^3 \leq 64$ for any smooth Fano threefold \cite[Cor.11]{MM81}, we see that $a=2$ or $a=4$.

If $a=2$ we have $(-K_Y)^3=32$, so by \cite[Table 2]{MM81} the threefold $Y$
is a blow-up of $\PP^3$. Yet we know that the Mori contractions of $Y$ are fibre spaces, a contradiction.

If $a=4$ we have $(-K_Y)^3=48$, so by \cite[Table 2]{MM81} the threefold $Y$
is the flag manifold $\PP(T_{\PP^2})$. Yet for this threefold both Mori contractions have
$1$-dimensional fibres, a contradiction to $\dim T_1=1$. 

{\em 2nd case: assume that $\dim T_1=\dim T_2=2$.}
The surfaces $T_1$ and $T_2$ are rational homogeneous of the same Picard rank.
Since $M$ is a rational homogeneous surface, we have $\rho(M)\leq 2$.
We get by the Lefschetz theorem
$$
\rho(T_i)=\rho(X)-1 \leq \rho(M)-1\leq 1.
$$
Thus $T_1\cong  T_2\cong \PP^2$.
Now the Mori-Mukai classification (or Lemma \ref{lemma-embedding})
implies that $X \simeq \PP(T_{\PP^2}) \subset \PP^2 \times \PP^2$.

Since $M \subset X$ is ample we have $M \in |\sO_X(a,b)|$ with $a \geq 1, b \geq 1$.
As in Case 1 we use Proposition \ref{pro:BeaMer} to construct a degree two morphism
$\tau: X \rightarrow Y$ ramified along $M$. 
The threefold $Y$ is Fano and
$$
(-K_Y)^3 = \frac{1}{2} (-K_X+M)^3 = \frac{3}{2} (2+a)(2+b)(4+a+b).
$$
Since $a \geq 1, b \geq 1$ we get $(-K_Y)^3 \geq 81$, a contradiction.
\end{proof}

\section{Proof of Theorem \ref{thm:NSdivisor}}

\begin{proof}
Our goal is to reduce Theorem \ref{thm:NSdivisor} to Theorem \ref{thm:NSampledivisor}
using a diagram chase in the spirit of \cite[Section 1.4]{Jah05}.

Since $M$ is effective and $X$ is homogeneous, the divisor $M$ is base-point-free, and induces a morphism
$\varphi\colon X\to U$. Moreover, there is a smooth ample divisor  $N$  on $U$ such that $M=\varphi^{-1}N$. After taking the Stein factorization $\varphi=\nu\circ\psi$ and replacing $\varphi$ with $\psi$
and $N$ with $\nu^{-1}N$, we may assume that $\varphi$ is a fibration.

{\em Step 1. We claim that $N$ is normal split in $U$.}
Consider the diagram
$$
\xymatrix{
T_{M/N}\ar[r]^h\ar[d]_{\alpha_M}&T_{X/U}\ar[d]^{\alpha}&\\
T_{M}\ar[r]_{\tau_M}\ar[d]_{d\varphi_M}&T_{X}\otimes\mathcal O_M\ar[d]^{d\varphi}\ar\ar@/_0.5pc/[l]_s\ar[r]&N_{M/X}\ar[d]\\
\varphi^*T_{N}\ar[r]&\varphi^*T_{U}\otimes\mathcal O_M\ar[r]&\varphi^*N_{N/U}\\
}
$$
where $s$ gives the splitting of the normal sequence of $M$ in $X$.
We notice that $h$ is an isomorphism.
For a local section $v$ of $\varphi^*T_{U}\otimes\mathcal O_M$ we want to define by diagram chase $\tilde s_U(v)=d\varphi(s(w))$
where $w$ is a local section of $T_X$ such that $d\varphi(w)=v$.
To show that $\tilde s_U$ is well defined, we assume $w\in \ker(d\varphi)$. Let $u$ be a local section of $T_{X/U}$
such that $\alpha(u)=w$ and $u'$ such that $h(u')=u$.
We want to prove that
$d\varphi_M(s(w))=0$. We have
$$d\varphi_M(s(w))=d\varphi_M(s(\alpha(h(u'))))=d\varphi_M(s(i(\alpha_M(u'))))=d\varphi_M(\alpha_M(u'))=0.$$
We defined $\tilde s_U\in \Hom(\varphi^*T_{N},\varphi^*T_{U}\otimes\mathcal O_M)\cong H^0(U, \varphi^*(T_{U}\otimes\mathcal O_N\otimes T_{N}^{\vee}))$ such that $\varphi^*\tau_N\circ\tilde s_U=id$.
Since $\varphi$ is a fibration and by the projection formula, this yelds a section $s_U\in H^0(U, T_{U}\otimes\mathcal O_N\otimes T_{N}^{\vee})$  such that $\tau_N\circ s_U=id$.

{\em Step 2. Conclusion.}
We make a case distinction in terms of $n := \dim U$.
\begin{itemize}
\item $n=1$;  equivalently $U \simeq \PP^1$. Then $M$ is a fibre of the locally trivial fibration~$\varphi$ and obviously normal split.
\item $n=2$;  equivalently $U \simeq \PP^2$ or $U \simeq \PP^1 \times \PP^1$.
Then $N$ is completely determined by Van de Ven's theorem and Proposition \ref{proposition-contractions}.
\item Finally if $n \geq 3$ we know by Theorem \ref{thm:NSampledivisor} that $U$ is the projective space or a quadric, and $N$ is a hyperplane section.
\end{itemize}
\end{proof}

\begin{remark} \label{remark-curve-case}
In this paper we have focused on the case of divisors, i.e. the smallest possible codimension. One might ask if the classification can also be carried out for
normal split submanifolds $M \subset X$ of dimension one. In this case $M \simeq \PP^1$, so the only question is to determine the embedding. Unfortunately there are often many possible choices for the embedding: let $X = Q^n \subset \PP^{n+1}$
be a smooth quadric and let $S \subset X$ be a linear section of codimension $n-2$, i.e.
$S \simeq \PP^1 \times \PP^1$ is a smooth quadric surface in $X$. Then $S$ is normal split
in $X$ \cite[Thm.4.5]{Jah05}. Moreover let $M \subset S$ be an irreducible element of $|\sO_{\PP^1 \times \PP^1}(1,a)|$ for {\em any $a \in \N$}. Then $M \subset S$ is normal split by Example \ref{example-fibration}, and therefore $M \subset X$ is normal split by \cite[Sect.1.2]{Jah05}.

In view of this example we think that a more reasonable approach is to classify normal split submanifolds of minimal codimension.
\end{remark}

\bibliographystyle{amsalpha}
\bibliography{Biblio}

\end{document}